      \tikzset{>=latex}
    \theoremstyle{plain}
      \newtheorem{theorem}{Theorem}
      \newtheorem*{theorem*}{Theorem}
        \newtheorem*{maintheorem}{Theorem~\ref{thm:main theorem}}
      \newtheorem{lemma}[theorem]{Lemma}
      \newtheorem{proposition}[theorem]{Proposition}
      \newtheorem{corollary}[theorem]{Corollary}
      \newtheorem{conjecture}[theorem]{Conjecture}
    \theoremstyle{definition}
      \newtheorem{definition}[theorem]{Definition}
    \theoremstyle{remark}
      \newtheorem*{remark}{Remark}
      \newtheorem{example}[theorem]{Example}
	\numberwithin{equation}{section}
	\numberwithin{theorem}{section}
  \DeclareMathOperator{\Frac}{Frac}
  \newcommand{\blp}{\Bigg(}
  \newcommand{\brp}{\Bigg)}
  \newcommand{\lb}{\left[}
  \newcommand{\rb}{\right]}
  \newcommand{\cE}{\mathcal E}
  \newcommand{\ce}{\mathcal E}
  \newcommand{\cA}{\mathcal A}
  \newcommand{\vS}{\vec s}
  \newcommand{\vs}{\vec s}
\begin{document}

\title{A Binomial Laurent Phenomenon Algebra Associated to the Complete Graph}

\author[Gastineau]{Stella S. Gastineau}
\email{\textcolor{blue}{\href{mailto:stellasuegastineau@gmail.com}{stellasuegastineau@gmail.com}}}
\address{Department of Mathematics, University of Michigan, Ann Arbor, MI 48109}

\author[Moreland]{Gwyneth Moreland}
\email{\textcolor{blue}{\href{mailto:gwynm@umich.edu}{gwynm@umich.edu}}}
\address{Department of Mathematics, University of Michigan, Ann Arbor, MI 48109}

 \thanks{The authors would like to thank their research advisor, Thomas Lam, for the help 
 and guidance he has provided throughout their research.}
 \thanks{The authors were funded by NSF grant DMS-1160726.}

\begin{abstract}
In this paper we find the exchange graph of $\mathcal{A}({\tau_n})$, the rank $n$
binomial Laurent phenomenon algebra associated to the complete graph $K_n$. More specifically, 
we prove that the exchange graph is isomorphic to that of $\mathcal{A}(t_n)$, rank $n$
linear Laurent phenomenon algebra associated to the complete graph which is discussed in
\cite{LP2}.
\end{abstract}
\maketitle

\section{Introduction}\label{sec:introduction}
Cluster algebras were first introduced by Sergey Fomin and Andrei Zelevinsky \cite{fomin}. 
When a cluster algebra is of finite type, that is, there are finitely many seeds, then its
combinatorial structure can be understood through polytopal complexes
called the {\em generalized associahedra} \cite{CFZ,FZ}. 
Cluster algebras, which only have binomial exchange polynomials, were later generalized to
Laurent phenomenon (LP) algebras by Thomas Lam and Pavlo Pylyavskyy \cite{LP1} to
include exchange polynomials with arbitrarily many monomials. 

Suppose that $R$ is a coefficient ring with unique factorization over $\mathbb{Z}$, and let
$\mathcal{F}$ be the rational function field over $n$ independent variables over $\Frac(R)$.
A Laurent phenomenon algebra is a subring $\mathcal{A}\subset
\mathcal{F}$ paired with a collection of seeds $t=(\mathbf{X},\mathbf{F})$ where
$\mathbf{X}=\{X_1,X_2,\dots,X_n\}\subset\mathcal{F}$ are cluster variables and 
$\mathbf{F}\subset R[X_1,X_2,\dots,X_n]$ are exchange polynomials (these both must satisfy some
conditions which will be given later). These seeds are connected by a process called mutation where
one cluster variable is replaced by a new cluster variable satisfying the relation
  $$(\text{old variable})\cdot(\text{new variable})=\text{exchange Laurent polynomial}.$$
A more complete overview of what a seed is and how they mutate is given in Section~\ref{sec:background}.

The {\em exchange graph}
of an LP algebra is the graph whose vertex set is the collection of seeds in the 
algebra and whose edges correspond to being able to mutate one seed to get the other. 
The {\em cluster complex} of a LP algebra is the simplicial complex with the collection of cluster
variables as its base set and faces equal to the clusters of the LP algebra. Zelevinsky made an
observation on the ``striking similarity'' 
between cluster complexes of {\em nested complexes}, studied by Feichtner and Sturmfels
in \cite{FS,Pos}, and cluster complexes
\cite{Zel}, but specific cluster complexes which were indeed nested complexes were not able to be found.
One goal of Lam and Pylyavskyy was to find such cluster complexes which they succeeded in doing in the form of Graph LP algebras \cite{LP2}.

\subsection{Graph LP algebras}
Let $\Gamma$ be a directed graph on $[n]$, and define the seed $t_\Gamma$ with
cluster variables $\{X_1,\dots,X_n\}$ and exchange polynomials $F_i=A_i+\sum_{i\to j} X_j$, where
$i\to j$ denotes an edge in $\Gamma$. For $\Gamma=K_n$ we will simply write $t_n$.
Then the LP algebra $\mathcal{A}(t_\Gamma)$
generated by the initial seed $t_\Gamma$ would be the {\em Laurent phenomenon algebra associated
to $\Gamma$}. A full description of $\cA(t_{\Gamma})$ for any digraph $\Gamma$, including a full 
description of its cluster complex and exchange graph, 
can be found in \cite[Theorem~1.1 and Theorem~5.1]{LP2}. 
One of highlights of this description is
that the cluster complex of $\cA(t_{\Gamma})$ is the {\em extended nested complex} of $\Gamma$, confirming
the observation of Zelevinsky. 

\begin{example}\label{example: graph Gamma}
Consider the digraph $\Gamma$ on $[5]$ with edges $1\to2$, $2\to1$, $2\to3$, $2\to5$,
$3\to2$, $4\to1$, $4\to3$, $4\to5$, $5\to3$, and $5\to4$ as shown in Figure~\ref{fig:graph lp}.
Then the initial seed $t_\Gamma$ associated with this $\Gamma$ is
  $$\begin{aligned}t_\Gamma&=\{(X_1,A_1+X_2),(X_2,A_2+X_1+X_3+X_5),(X_3,A_3+X_2), \\
  	&\qquad\qquad(X_4,A_4+X_1+X_3+X_5),(X_5,A_5+X_3+X_4)\}.\end{aligned}$$
\end{example}

\begin{figure}
\begin{tikzpicture}[scale=1.25,shorten >=2pt]
\tikzset{
    every node/.style={
        circle,
        draw,
        solid,
        fill,
        inner sep=0pt,
        minimum width=4pt
    }
}
\draw (72:1) node[label=above:$1$] {} -- (144:1) node[label=above:$2$] {};
\draw (144:1) -- (216:1) node[label=below:$3$] {};
\draw[<-] (216:1) -- (288:1) node[label=below:$4$] {};
\draw (288:1) -- (360:1) node[label=right:$5$] {};
\draw[->] (360:1) -- (216:1);
\draw[->] (288:1) -- (72:1);
\draw[->] (144:1) -- (360:1);
\end{tikzpicture}
\caption{The example graph $\Gamma$.}
\label{fig:graph lp}
\end{figure}
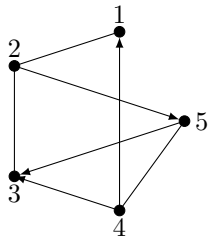

\begin{example}
\label{example:complete graph}
Consider the complete graph $K_n$ on $[n]$ and let $\cA(t_{n})$ be the 
normalized LP algebra associated to $K_n$. Then the exchange graph of $\mathcal{A}(t_{n})$
can be completely described as follows \cite{LP2}:
\begin{itemize}
\item 
The seeds in $\cA(t_{n})$ are in bijection with activation sequences, 
which are simply ordered subsets of $[n]$.
\item
The cluster variables that are not initial cluster variables correspond with 
non-empty unordered subsets of $[n]$.
\item 
Mutating the seed $t_{n}^{\vS}$ can result in three types of seeds $t_{n}^{\vS'}$:
  $$\vS'=\begin{cases}
    (s_1,\dots,s_k,\sigma) & \text{if mutating at some $\sigma\notin\vS$,} \\
    (s_1,\dots,s_{k-1}) & \text{if mutating at $s_k\in\vS$,} \\
    (s_1,\dots,s_{i-1},s_{i+1},s_{i},\dots,s_k) & \text{if mutating at $s_i\in\vS$.}
    \end{cases}$$
\end{itemize}
The cluster complex of $\mathcal{A}(t_{n})$ is the extended nested complex of $K_n$ as stated above.
\end{example}

\subsection{Binomial graph LP algebras}\label{subsec:binomial graph lp algebras}
In the previous subsection we defined a family of initial seeds 
associated with digraphs, but we can also define another family of initial seeds
that can be associated to the same digraphs. Let $\Gamma$ be a digraph on $[n]$ and consider the seed
$\tau_\Gamma$ with cluster variables $\{X_1,\dots,X_n\}$ and exchange polynomials 
$F_i=A_i+\prod_{i\to j}X_j$, where $i\to j$ denotes an edge in $\Gamma$. For $\Gamma=K_n$ we will simply
write $\tau_n$. Note that the exchange polynomials
in $\tau_{\Gamma}$ are binomials, and so we will call the LP algebra $\cA(\tau_{\Gamma})$ generated by
the initial seed $\tau_\Gamma$ the {\em binomial Laurent phenomenon algebra associated to $\Gamma$}.

\begin{example}
Recall the digraph $\Gamma$ as shown in Figure~\ref{fig:graph lp} and used in 
Example~\ref{example: graph Gamma}. The initial binomial seed $\tau_\Gamma$ associated to $\Gamma$ is
  $$\begin{aligned}\tau_\Gamma&=\{(X_1,A_1+X_2),(X_2,A_2+X_1X_3X_5),(X_3,A_3+X_2), \\
  	&\qquad\qquad(X_4,A_4+X_1X_3X_5),(X_5,A_5+X_3X_4)\}.\end{aligned}$$
\end{example}

In this paper we will completely describe the specific case of $\cA(\tau_{n})$ the normalized binomial
LP algebra associated to the complete graph $K_n$. In \S\ref{sec:recursive} we define the exchange
polynomials of $\cA(\tau_{n})$ which we will prove in \S\ref{seeds and mutations}. We will also
describe the cluster variables of $\cA(\tau_{n})$ in \S\ref{seeds and mutations}, and
we will prove how seeds mutate in $\cA(\tau_{n})$. Our main theorem is

\begin{theorem}
\label{thm:main theorem}
Let $\mathcal{A}(\tau_{n})$ be the normalized LP algebra 
generated by initial seed $\tau_{n}$. Similarly, let $\mathcal{A}(t_{n})$ 
be the normalized LP algebra generated by the initial $t_{n}$. Then the respective exchange graphs
of $\mathcal{A}(\tau_{n})$ and $\mathcal{A}(t_{n})$ are isomorphic.
\end{theorem}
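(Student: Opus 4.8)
The plan is to build an explicit isomorphism of exchange graphs by showing that $\cA(\tau_n)$ is governed by exactly the same combinatorial data — activation sequences, i.e. ordered subsets of $[n]$ — as $\cA(t_n)$ in Example~\ref{example:complete graph}. First I would set up the recursive description of the seeds of $\cA(\tau_n)$ promised in \S\ref{sec:recursive} and \S\ref{seeds and mutations}: to each ordered subset $\vS=(s_1,\dots,s_k)$ of $[n]$ I associate a seed $\tau_n^{\vS}$, with $\tau_n^{\emptyset}=\tau_n$, by mutating in the order prescribed by $\vS$. The content here is to prove that these are genuinely the seeds of $\cA(\tau_n)$: that the exchange polynomials and exchange Laurent polynomials have the claimed (binomial) form, that they satisfy the normalization and coprimality conditions from \S\ref{sec:background}, and that no two distinct activation sequences give the same seed while every seed reachable from $\tau_n$ arises this way. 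I would run this as an induction on $k=|\vS|$, checking at each mutation that the new cluster variable and the updated exchange polynomials match the recursive formulas, using the LP mutation rule $(\text{old})\cdot(\text{new})=\hat F$.

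Second, with the seeds parametrized, I would verify that mutation in $\cA(\tau_n)$ follows precisely the three-case rule displayed in Example~\ref{example:complete graph}: mutating $\tau_n^{\vS}$ at an index $\sigma\notin\vS$ appends $\sigma$; mutating at the last activated index $s_k$ deletes it; and mutating at an interior $s_i$ transposes $s_i$ with $s_{i+1}$. This is the combinatorial heart of the argument and is exactly where the binomial exchange polynomials must be shown to mutate in lockstep with the linear ones of $t_n$ — the same bookkeeping of which indices are ``active'' has to emerge, even though the polynomials themselves are products rather than sums. I expect this verification, carried out mutation-by-mutation on the recursive formulas, to be the main obstacle: one must track how the factor $\prod_{i\to j}$ (versus $\sum_{i\to j}$) propagates through a mutation, confirm that the interior-transposition case really does just swap two symbols (in particular that mutating twice at an interior index returns the original seed, so edges are well defined), and rule out any spurious coincidences or extra mutations not present in the $t_n$ picture.

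Finally, the isomorphism itself is then immediate: define $\Phi$ on vertices by sending the seed $t_n^{\vS}$ of $\cA(t_n)$ to the seed $\tau_n^{\vS}$ of $\cA(\tau_n)$ for each activation sequence $\vS$. By the first step $\Phi$ is a well-defined bijection on vertex sets, and by the second step an unordered pair of seeds of $\cA(t_n)$ is an edge if and only if their activation sequences differ by one of the three moves above, which by the same computation is exactly the edge condition in $\cA(\tau_n)$; hence $\Phi$ preserves and reflects adjacency, so it is a graph isomorphism. As a remark, the cluster-complex statement then also transfers: non-initial cluster variables in both algebras are indexed by non-empty unordered subsets of $[n]$, and the clusters coincide, so the cluster complex of $\cA(\tau_n)$ is likewise the extended nested complex of $K_n$. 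The only subtlety to flag is that the isomorphism is of exchange graphs (equivalently, of the underlying combinatorics), not an algebra isomorphism $\cA(\tau_n)\cong\cA(t_n)$ — the cluster variables are honestly different rational functions — so I would be careful to phrase every comparison at the level of the indexing by activation sequences.
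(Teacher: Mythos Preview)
Your proposal is correct and follows essentially the same strategy as the paper: parametrize seeds by activation sequences via an induction on $|\vS|$ (this is Theorem~\ref{thm:exchange polynomials} together with Corollary~\ref{cor:exchange polynomials}), verify the three-case mutation rule (Theorem~\ref{thm:mutating seeds}), and then read off the graph isomorphism exactly as you describe. One caution: the mutated exchange polynomials are \emph{not} binomial---the recursive formulas in Definition~\ref{defn:recursive structure} are considerably more intricate, involving auxiliary polynomials $P^{\vS}_{s_i}$ and monomials $C^{\vS}_{s_i}$---and the paper's verification of the interior-transposition case leans on a commuting-mutations result (Theorem~\ref{thm:mutation commute}) to reduce to checking only mutation at $s_{k-1}$, a shortcut you will likely want once you see how the formulas grow.
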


The description of $\cA(t_n)$ we gave in Example~\ref{example:complete graph} allows us to prove 
Theorem~\ref{thm:main theorem} in three parts:
\begin{itemize}
\item The seeds in $\cA(\tau_{n})$ are in bijection with activation sequences [Corollary~\ref{cor:seed mutation}].
\item The cluster variables that are not initial cluster variables are in bijection with
non-empty unordered subsets of $[n]$ [Corollary~\ref{cor:exchange polynomials}].
\item Mutating the seed $\tau_{n}^{\vS}$
can result in three types of seeds $\tau_{n}^{\vS'}$ [Theorem~\ref{thm:mutating seeds}]:
  $$\vS'=\begin{cases}
    (s_1,\dots,s_k,\sigma) & \text{if mutating at some $\sigma\notin\vS$,} \\
    (s_1,\dots,s_{k-1}) & \text{if mutating at $s_k\in\vS$,} \\
    (s_1,\dots,s_{j-1},s_{j+1},s_{j},\dots,s_k) & \text{if mutating at $s_j\in\vS(k-1)$.}
    \end{cases}$$
\end{itemize}
Therefore if we prove this we have also proven that the cluster complex of $\cA(\tau_n)$ is 
also the extended nested complex of $K_n$.

\section{Background on LP algebras}\label{sec:background}
An extensive overview of Laurent phenomenon algebras can be found in \cite{LP1} and \cite{LP2}; therefore,
we will only give a brief description of LP algebras as to act as a reference for this paper.

\subsection{Seeds}
Let $R$ be a coefficient ring with unique factorization over $\mathbb{Z}$. Consider the rational
function field $\mathcal{F}$ over $n$ independent variables over the field of fractions
$\Frac(R)$. We will this $\mathcal{F}$ the {\em ambient field}.

\begin{definition}
\label{definition:seeds}
A {\em seed} of rank $n$ in $\mathcal{F}$ is an ordered pair $(\mathbf{X},\mathbf{F})$ consisting
of $\mathbf{X}=\{X_1,\dots,X_n\}$ a transcendence basis of $\mathcal{F}$ and $\mathbf{F}=
\{F_1,\dots,F_n\}$ a collection of auxiliary polynomials in $\mathcal{P}=R[X_1,\dots,X_n]$ satisfying the
following:
  \begin{description}
  \item[LP1] Each $F_i$ is irreducible in $\mathcal{P}$ and is not divisible by any variable $X_j$.
  \item[LP2] Each $F_i$ does not involve the variable $X_i$.
  \end{description}
The set $\mathbf{X}$ is called the {\em cluster}, and its elements are called the {\em cluster variables}.
The elements of $\mathbf{F}$ are called the {\em exchange polynomials}. Although these sets are unordered,
it will often be convenient to think of them and write them as ordered.
\end{definition}

For a seed $(\mathbf{X},\mathbf{F})$ we denote the Laurent polynomial ring $R[X_1^{\pm 1},\dots,X_n^{\pm
1}]$ as $\mathcal{L}=\mathcal{L}(\mathbf{X},\mathbf{F})$. Now we can define a collection of 
polynomials $\hat{\mathbf{F}}=\{\hat F_1,\dots,\hat F_n\}\subset \mathcal{L}$ so that they satisfy
the following:
  \begin{enumerate}
  \item Each $\hat F_i=X_1^{a_1}\cdots\widehat{X_i}\cdots X_n^{a_n}F_i$ for some
  $a_1,\dots,\widehat{a_i},\dots,a_n\in\mathbb{Z}_{\leq0}$.
  \item Each $\hat F_i|_{X_j\leftarrow F_j/X}$ is in $R[X_1^{\pm1},\dots,X_{j-1}^{\pm1},
  X^{\pm1},X_{j+1}^{\pm1},\dots,X_n^{\pm1}]$ and is not divisible by $F_j$ in 
    $$R[X_1^{\pm1},\dots,X_{j-1}^{\pm1},X^{\pm1},X_{j+1}^{\pm1},\dots,X_n^{\pm1}].$$    
  \end{enumerate}
The Laurent polynomials in $\hat{\mathbf{F}}$ are uniquely defined by $\mathbf{F}$ and likewise
$\hat{\mathbf{F}}$ uniquely defines $\mathbf{F}$. Note that a lower bound for each $a_j$ above is minus
the maximum number of times one can factor $F_{i}|_{X_j\leftarrow0}$ by $F_j$.

\subsection{The mutation process} 
Suppose we have a seed $t=(\mathbf{X},\mathbf{F})$. Then for 
every $i\in[n]$, we can obtain a new seed written $t'=(\mathbf{X}',\mathbf{F}')
=\mu_i(\mathbf{X},\mathbf{F})$ where the cluster variables of $t'$ are
  \begin{equation}
  \label{equation:mutating variables}
  \mu_i(X_j)=X'_j=\begin{cases}X_j&\text{if $j\neq i$,}\\ \hat F_i/X_i&\text{if $j=i$.}\end{cases}
  \end{equation}
The exchange polynomials $F'_j\in\mathcal{L}'$ are obtained from the $F_j$. If $F_j$ does not
depend on $X_i$, then $F'_j=F_j$ and we consider $F'_j$ as an element of $\mathcal{L}'$.
Otherwise, we define $G_j$ by
  $$G_j=F_j\Bigg|{}_{X_i\leftarrow \frac{\hat F_i|_{X_j\leftarrow0}}{X'_i}}.$$
Note that using $\hat F_i$ with $X_j=0$ in our substitution guarantees that
our final polynomial will not depend on $X_j$. Next we define $H_j$ as $G_j$ with all common
factors in $R[X_1,\dots,\widehat{X_i},\dots,\widehat{X_j},\dots,X_n]$ with $\hat
F_i|_{X_j\leftarrow0}$ divided out. Lastly, we have that $\mu_i(F_j)=F'_j=MH_j$ where $M$ is a Laurent
monomial in $X_1,\dots,\widehat{X'_j},\dots,X_n$ such that $F'_j$ satisfies the condition 
{\bf LP1} and is not divisible by any variable in $P'$. That is, $M$ clears the denominators of $H_j$.

\begin{remark}
Our definition only defines $F'_j$ up to a unit in $R$. However, in our results,
we give an an explicit choice of $F'_j$ removing the ambiguity in a consistent way. Also,
Proposition~2.9 in \cite{LP1} ensures us that mutation at $i$ produces a valid seed, while
Proposition~2.10 tells us that $\mu_i(\mu_i(\mathbf{X},\mathbf{F}))=(\mathbf{X},\mathbf{F})$,
so the edges of the exchange graph are not directed.
\end{remark}

\subsection{Laurent phenomenon algebras} 
A {\em Laurent phenomenon algebra} $(\mathcal A,\{(\mathbf{X},\mathbf{F})\})$ is a subring
$\mathcal{A}\subset\mathcal{F}$ and a collection of seeds $\{(\mathbf{X},\mathbf{F})\}\subset
\mathcal{F}$. The algebra $\mathcal{A}\subset \mathcal{F}$ is generated over $R$ by all
exchange variables of seeds $\{(\mathbf{X},\mathbf{F})\}$. Given a seed in $t$ in $\mathcal{F}$,
we shall denote $\mathcal{A}(t)$ to be any LP algebra that has $t$ as a seed and say that it is
generated by the {\em initial seed} $t$.

We shall say that two seeds $t=(\mathbf{X},\mathbf{F})$ and $t'=(\mathbf{X}',\mathbf{F}')$ are
{\em equivalent} provided that $X_i/X'_i$ and $F_i/F'_i$ are units in $R$ for each $i\in[n]$, where
$F_i$ and $F'_i$ are both considered elements of the ambient field $\mathcal{F}$. We shall say an
LP algebra $\cA$ is {\em normalized} if whenever two seeds $t$ and $t'$ are equivalent, we have $t=t'$.
If $\cA$ is normalized, we shall say that it is of {\em finite type} if it has finitely many seeds. The
LP algebras $\cA(t_n)$ and $\cA(\tau_n)$ (see Section \ref{sec:introduction}) studied 
in this paper will be normalized and of finite type.

The {\em exchange graph} on an LP algebra $(\mathcal{A},\{(\mathbf{X,F})\})$ is a connected graph with
vertex set equal to the seeds of $\{(\mathbf{X,F})\}$ and the edges given by mutations; that is,
$\{t,t'\}$ is an edge in the exchange graph if there exists an $i$ such that $\mu_i(t)=t'$.

\section{Set-Up}\label{sec:recursive}
An {\em activation sequence} $\vec s=(s_1,\dots,s_k)\subset[n]$ of length $k$ is an ordered subset of $[n]$. 
Activation sequences can also be thought of as a partial permutation of $[n]$, but we prefer
to think of them as ordered subsets of $[n]$. If $\vec s$
is an activation sequence of length $k$, then for every $r\in[k]$, we denote $\vec s(r)\subset \vec
s$ as the {\em subactivation sequence} of length $r$ whose ordered elements are the first 
$r$ ordered elements of $\vec s$; that is,
  $$\vec s(r)=(s_1,\dots,s_r)\subset (s_1,\dots,s_r,\dots,s_k)\subset[n].$$
The {\em underlying set} $s$ of an activation sequence $\vec s$ just refers to the 
forgetful (unordered) copy of $\vs$; that is,
  $$s=\{\sigma\in\vec s\}.$$

Recall that $\tau_n=(\mathbf{X,F})$ is the binomial Laurent phenomenon algebra associated to the complete graph.
If $\vs\subset[n]$ is an activation sequence of length $k$, then we will denote 
$\tau_n^{\vs}=\mu_{\vs}(\tau_n)=\big(\mathbf{Y}^{\vs},\mathbf{E}^{\vs}\big)$ to be the seed obtained by mutating
$\tau_n$ at $s_1,s_2,\dots,s_k$ in that order
  $$\tau_n^{\vec s}=\mu_{s_k}\mu_{s_{k-1}}\cdots\mu_{s_1}(\tau_n).$$
We say that $\tau_n^{\vec s}$ is the {\em seed generated by $\vec s$}. 
We will write $Y_{\ell}^{\vs}$ to denote the cluster variables $\mathbf{Y}^{\vs}$ of $\tau_n^{\vs}$.
Since these cluster variables come in the form given by \eqref{equation:mutating variables}, we know we
can simplify this notation to say that recursively denote
$Y_{\vs}=\mu_{s_k}(Y_{s_k}^{\vs(k-1)})$ where $Y_{(s_1)}=\mu_{s_1}(X_{s_1})$.
It is not difficult to see that we can now write the cluster variables $\mathbf{Y}^{\vs}$ as either
$\{Y_{\ell}^{\vs}\mid \ell\in[n]\}$ or as $\{X_\sigma\mid \sigma\notin\vs\}\cup\{Y_{\vs(r)}\mid 1\leq r\leq k\}$.
Throughout this paper, we will switch between these two notations depending on which is the most convenient at the time.

Our first goal is to explicitly describe the seed associated to each activation sequence $\vs\subset[n]$. In
Definition~\ref{defn:recursive structure} we give a recursive set of polynomials $\bm{\cE}^{\vs}$
in our polynomial ring $R[\mathbf{Y}^{\vs}]$
which we will prove to be the exchange polynomials of $\tau_n^{\vs}$ in Theorem~\ref{thm:exchange polynomials}.
This will allow us to conclude that the seeds of $\mathcal{A}(\tau_n)$ are in bijection with activation sequences.
With an explicit description of the exchange polynomials, we will also be able to give a formula for the 
cluster variables $\mathbf{Y}^{\vs}$ in terms of the initial cluster variables $\mathbf{X}$ in 
Corollary~\ref{cor:exchange polynomials}, where it will follow that cluster variables of $\mathcal{A}(\tau_n)$ are
in bijection with non-empty ordered subsets of $[n]$ (i.e., the underlying sets of the activation sequences).

\begin{definition}
\label{defn:recursive structure}
Let $\vec s\subset [n]$ be an activation sequence of length $k$.
Then for all $\sigma\notin\vec s$, let
  \begin{equation}
  \label{eqn:E notAC}
  \cE^{\vec s}_{\sigma}=\blp\prod_{r=1}^{k}A_{s_r}\brp
  \blp\prod_{{\rho\notin \vec s} \atop {\rho\neq\sigma}}X_\rho\brp+A_\sigma Y_{\vec s}. 
  \end{equation}
For each $s_i\in\vec s$ we can define
  \begin{equation}
  \label{eqn:C AC}
  C^{\vec s}_{s_i}=\prod_{r=i+2}^{k}Y_{\vec s(r)}^{2^{r-i-1}}.
  \end{equation}
Then let  
  \begin{equation}
  \label{eqn:P k AC}
  P^{\vec s}_{s_k}=\blp\prod_{r=1}^{k-1}A_{s_r}\brp\blp\prod_{\rho\notin\vec s}X_\rho\brp
  +A_{s_k}Y_{\vec s(k-1)},
  \end{equation}
  \begin{equation}
  \label{eqn:E k AC}
  \cE^{\vec s}_{s_k}=P^{\vec s}_{s_k}.
  \end{equation}
For every $s_i\in\vec s(k-1)=(s_1,\dots,s_{k-1})$ let
  \begin{equation}
  \label{eqn:P AC}
  P^{\vec s}_{s_i}=\blp\prod_{r=1}^{i-1}A_{s_r}\brp\blp\prod_{\rho\notin\vec s}X_\rho\brp
  \blp\prod_{r=i+1}^{k} P^{\vec s}_{s_r}\brp+A_{s_i}Y_{\vS(i-1)}Y_{\vS(i+1)}C^{\vS}_{s_i}
  \end{equation}
  \begin{equation}
  \label{eqn:E AC}
  \cE^{\vec s}_{s_i}=\blp\prod_{r=1}^{i-1}A_{s_r}\brp^2\blp\prod_{\rho\notin\vec s}X_\rho\brp^2
  \blp\prod_{r=i+2}^{k} P^{\vec s}_{s_r}\brp^2+Y_{\vS(i-1)}Y_{\vS(i+1)}C^{\vS}_{s_i}.
  \end{equation}
Note that above when $i=k-1$ the product
  $$\prod_{r=k+1}^kP^{\vec s}_{s_r}$$
which does not make sense. In this case (and all other cases where bottom limit of a product is greater
than the top limit) it is understood that the product is empty and equals 1. 
With \eqref{eqn:E AC}, we can write \eqref{eqn:P AC} as
  \begin{equation}
  \label{eqn:P alt AC}
  \begin{aligned}
  P^{\vec s}_{s_i}
  &=\blp\prod_{r=1}^{i-1}A_{s_r}\brp\blp\prod_{\rho\notin\vec s}X_\rho\brp
  \blp\prod_{r=i+2}^{k} P^{\vec s}_{s_r}\brp \\
  &\qquad\cdot\lb \blp\prod_{r=1}^{i}A_{s_r}\brp
  \blp\prod_{\rho\notin\vec s}X_\rho\brp\blp\prod_{r=i+2}^{k} P^{\vec s}_{s_r}\brp
  + A_{s_{i+1}}Y_{\vS(i)}Y_{\vS(i+2)}C^{\vS}_{s_{i+1}}\rb\\
  &\qquad+A_{s_i}Y_{\vS(i-1)}Y_{\vS(i+1)}C^{\vS}_{s_i} \\
  &=A_{s_{i+1}}Y_{\vS(i)}Y_{\vS(i+2)}C^{\vS}_{s_{i+1}}\blp\prod_{r=1}^{i-1}A_{s_r}\brp
  \blp\prod_{\rho\notin\vec s}X_\rho\brp\blp\prod_{r=i+2}^{k} P^{\vec s}_{s_r}\brp
  +A_{s_i}\cE^{\vec s}_{s_i}
  \end{aligned}
  \end{equation}
which will be advantageous to use at times. We will denote the set of all $\cE^{\vec s}_{\ell}$ for
$\ell\in[n]$ as $\bm{\cE}^{\vec s}=\{\cE^{\vec s}_{\ell}\mid \ell\in[n]\}$.
\end{definition}

Now we will begin to look at the structure of the polynomials in 
Definition~\ref{defn:recursive structure}. In order to prove that $\bm{\cE}^{\vs}$ are indeed
the exchange polynomials of the seed $\tau_n^{\vs}$, we will need to understand the polynomials
$\hat{\bm{\cE}}{}^{\vs}=\{\hat{\cE}^{\vs}_1,\dots,\hat{\cE}^{\vs}_n\}$ defined by
  \begin{equation}
  \label{eqn:denom defn}
  \hat{\cE}^{\vs}_\ell=\blp\prod_{{\rho\notin\vs}\atop{\rho\neq\ell}}X_\rho^{a_\rho}\brp
    \blp\prod_{{s_j\in\vs}\atop{s_j\neq\ell}}Y_{\vs(j)}^{b_j}\brp\cE^{\vs}_\ell
  \end{equation}
for some $a_\rho$ and $b_j$ for all $\ell\in[n]$. 
In order to find these $a_\rho$ and $b_j$ for any given $\ell\in[n]$, we will need to respectively
understand how $\cE_{\rho}^{\vs}$ and $\cE_{s_j}^{\vs}$ can be factored from $\cE_{\ell}^{\vs}$.


\begin{lemma}
\label{lemma:denom lemma}
Suppose that $\vs\subset[n]$ is an activation sequence of length $k$. If $i+2\leq j\leq k$, then 
there are exactly $2^{j-i-1}$ factors of $\cE_{s_j}^{\vs}$ in 
  \begin{equation}
  \label{eqn:denom lemma}
  \cE_{s_i}^{\vs}|_{Y_{\vs(j)}\leftarrow0}=
    \blp\prod_{r=1}^{i-1}A_{s_r}\brp^2\blp\prod_{\rho\notin\vs}X_\rho\brp^2\blp\prod_{r=i+2}^kP_{s_r}^{\vs}
     \Big|_{Y_{\vs(j)}\leftarrow 0}\brp^2.
  \end{equation}
\end{lemma}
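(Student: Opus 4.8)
The plan is to prove this by downward induction on $i$ (with $j$ fixed, or equivalently by tracking how the nested products $P^{\vs}_{s_r}$ behave under the substitution $Y_{\vs(j)}\leftarrow 0$). First I would record the base case: when $i = j-2$, the claim asserts that there are exactly $2^{j-i-1} = 2$ factors of $\cE^{\vs}_{s_j}$ in $\cE^{\vs}_{s_i}|_{Y_{\vs(j)}\leftarrow 0}$. Since the right-hand side of \eqref{eqn:denom lemma} is a perfect square times $A$'s and $X$'s (which are coprime to $\cE^{\vs}_{s_j}$ by \textbf{LP1}), it suffices to show that $P^{\vs}_{s_{j-1}}\big|_{Y_{\vs(j)}\leftarrow 0}$ is divisible by $\cE^{\vs}_{s_j}$ exactly once; note $r$ ranges over $i+2 = j$ up to $k$, so the relevant factor in $\prod_{r=i+2}^k P^{\vs}_{s_r}$ includes $P^{\vs}_{s_j} = \cE^{\vs}_{s_j}$ itself (by \eqref{eqn:E k AC} when $j=k$, and I would need the analogous identity for $j < k$ that $P^{\vs}_{s_j}|_{Y_{\vs(j)}\leftarrow 0} = \cE^{\vs}_{s_j}|_{Y_{\vs(j)}\leftarrow 0}$, which comes directly from \eqref{eqn:P AC} since setting $Y_{\vs(j)}=0$ kills the monomial $A_{s_j}Y_{\vs(j-1)}Y_{\vs(j+1)}C^{\vs}_{s_j}$... wait, $Y_{\vs(j)}$ appears in $C^{\vs}_{s_{j-1}}$ — I'd need to check $C^{\vs}_{s_j}$ does \emph{not} involve $Y_{\vs(j)}$, which is true since $C^{\vs}_{s_j} = \prod_{r=j+2}^k Y_{\vs(r)}^{2^{r-j-1}}$). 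So I'd verify the base case reduces to: in $P^{\vs}_{s_{j-1}}|_{Y_{\vs(j)}\leftarrow 0}$, the factor $\cE^{\vs}_{s_j}$ appears exactly once — which follows from \eqref{eqn:P alt AC} applied with $i = j-1$, since that rewrites $P^{\vs}_{s_{j-1}}$ as ($\cE^{\vs}_{s_j}$-free monomial) $\cdot \prod_{r\geq j+1} P^{\vs}_{s_r} + A_{s_{j-1}}\cE^{\vs}_{s_{j-1}}$ times $\cE^{\vs}_{s_j}$... hmm, I need to stare at the exact form, but the alternating recursion \eqref{eqn:P alt AC} is clearly the right tool.

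For the inductive step, suppose the claim holds for $i+1$, i.e.\ $\cE^{\vs}_{s_{i+1}}|_{Y_{\vs(j)}\leftarrow 0}$ has exactly $2^{j-i-2}$ factors of $\cE^{\vs}_{s_j}$. The key move is to use \eqref{eqn:P alt AC} to express $P^{\vs}_{s_i}$ in terms of $\cE^{\vs}_{s_i}$ and $P^{\vs}_{s_r}$ for $r > i$; combined with the earlier results relating $P^{\vs}_{s_r}$ to $\cE^{\vs}_{s_r}$ under the substitution, one tracks the $\cE^{\vs}_{s_j}$-adic valuation through the recursion. The doubling $2^{j-i-2} \mapsto 2^{j-i-1}$ should come from the appearance of squared products $\big(\prod_{r=i+2}^k P^{\vs}_{s_r}\big)^2$ in \eqref{eqn:E AC}: the factor $P^{\vs}_{s_j}$ (resp.\ the string $P^{\vs}_{s_{j-1}}, \dots, P^{\vs}_{s_{i+2}}$ after iterating) each contribute a valuation that gets squared, so the total valuation satisfies $v_i = 2 v_{i+1}$ (roughly), giving $v_i = 2^{j-i-1}$. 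I would also need to confirm that the $A$-monomial summand $Y_{\vs(i-1)}Y_{\vs(i+1)}C^{\vs}_{s_i}$ in \eqref{eqn:E AC}, after setting $Y_{\vs(j)}=0$: since $j \geq i+2$, if $j \geq i+2$ then $Y_{\vs(j)}$ could appear in $C^{\vs}_{s_i} = \prod_{r=i+2}^k Y_{\vs(r)}^{2^{r-i-1}}$, so this summand vanishes when $j \leq k$, meaning $\cE^{\vs}_{s_i}|_{Y_{\vs(j)}\leftarrow 0}$ equals just the squared first term — exactly \eqref{eqn:denom lemma}. Good, so the displayed formula in the statement is the \emph{easy} part (it's immediate from the vanishing of the monomial summand), and the real content is the valuation count $2^{j-i-1}$.

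The main obstacle I anticipate is bookkeeping the $\cE^{\vs}_{s_j}$-adic valuation of the nested expression $\big(\prod_{r=i+2}^k P^{\vs}_{s_r}|_{Y_{\vs(j)}\leftarrow 0}\big)^2$ cleanly. The products $P^{\vs}_{s_r}$ for $i+2 \leq r \leq j-1$ are themselves built recursively and each carries a power of $\cE^{\vs}_{s_j}$ (by the induction, applied with $i$ replaced by these intermediate indices, or by a strengthened induction hypothesis), while $P^{\vs}_{s_j}|_{Y_{\vs(j)}\leftarrow 0} = \cE^{\vs}_{s_j}|_{Y_{\vs(j)}\leftarrow 0}$ contributes valuation $1$, and $P^{\vs}_{s_r}$ for $r > j$ contribute $0$ (they don't involve $Y_{\vs(j)}$ nor pick up $\cE^{\vs}_{s_j}$ — this needs checking via \eqref{eqn:P AC}). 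So I would likely \emph{strengthen the induction hypothesis} to also control $v_{\cE^{\vs}_{s_j}}\big(P^{\vs}_{s_r}|_{Y_{\vs(j)}\leftarrow 0}\big)$ for all $r \leq j$, proving the $P$-version and the $\cE$-version simultaneously by a single downward induction, with \eqref{eqn:P alt AC} as the engine relating the two. The arithmetic $1 + 2(1 + 2 + \cdots) $-type sum collapsing to a power of $2$ is the kind of thing that works out but requires care to state as a clean lemma. Also worth a remark: one must check $\cE^{\vs}_{s_j}$ is genuinely irreducible / prime in the relevant ring so that "number of factors" = valuation is well-defined, but this should follow from \textbf{LP1} applied to the seed $\tau_n^{\vs}$ once Theorem~\ref{thm:exchange polynomials} is available (or be taken as part of an inductive package).
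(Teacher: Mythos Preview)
Your proposal is correct and converges to essentially the same argument as the paper. The paper also observes that the displayed equation is immediate (the monomial $Y_{\vs(i-1)}Y_{\vs(i+1)}C^{\vs}_{s_i}$ vanishes under $Y_{\vs(j)}\leftarrow 0$) and that the real work is the valuation count; it then does exactly what you describe as the ``strengthened induction hypothesis'': it fixes $i,j$ and proves directly, by induction on $r$ running from $k$ down to $i+2$, that the $\cE^{\vs}_{s_j}$-valuation of $P^{\vs}_{s_r}|_{Y_{\vs(j)}\leftarrow 0}$ is $0$ for $r\geq j+1$, equals $1$ for $r=j$ (via \eqref{eqn:P alt AC}, giving $P^{\vs}_{s_j}|_{Y_{\vs(j)}\leftarrow 0}=A_{s_j}\cE^{\vs}_{s_j}$, so your ``$=\cE^{\vs}_{s_j}$'' is off by the harmless unit $A_{s_j}$), and equals $2^{j-r-1}$ for $i+2\leq r\leq j-1$; summing and squaring gives $2\big(1+\sum_{r=i+2}^{j-1}2^{j-r-1}\big)=2^{j-i-1}$. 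Your initial framing as downward induction on $i$ with $v_i=2v_{i+1}$ is slightly off-target (and your base-case aside about $P^{\vs}_{s_{j-1}}$ is a stray: for $i=j-2$ the product starts at $r=j$, not $j-1$), but once you pass to the $P$-version you are doing the paper's proof.
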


\begin{proof}
First we will prove by induction on $r$ that for $j\leq k-2$ and all $j+2\leq r\leq k$, 
$\cE_{s_j}^{\vs}$ can not be factored
from $P_{s_r}^{\vs}|_{Y_{\vs(j)}\leftarrow0}$. For $r=k$, we have that
$P_{s_k}^{\vs}|_{Y_{\vs(j)}\leftarrow0}=P_{s_k}^{\vs}$ does not depend
on $Y_{\vs(j+1)}$ but $\cE_{s_j}^{\vs}$ does, and so $\cE_{s_j}^{\vs}\neq P_{s_k}^{\vs}$. Then
since $P_{s_k}^{\vs}=\cE_{s_k}^{\vs}$ which is irreducible, we have that we cannot factor
$P_{s_r}^{\vs}$ by $\cE_{s_j}^{\vs}$.
Then, looking at \eqref{eqn:P alt AC} for any $j+2\leq r\leq k-1$ and noting that once
again $\cE_{s_r}^{\vs}$ is irreducible, we get that $\cE_{s_k}^{\vs}$ does not factor out of
$P_{s_r}^{\vs}|_{Y_{\vs(j)}\leftarrow0}=P_{s_r}^{\vs}$ by induction.

For the cases of $P_{s_{j+1}}^{\vs}$ and $P_{s_j}^{\vs}$, by \eqref{eqn:P AC} we have that
  \begin{equation}\label{eqn:denom P j+1}
  P^{\vec s}_{s_{j+1}}\Big|_{Y_{\vs(j)}\leftarrow0}
    =\blp\prod_{r=1}^{j}A_{s_r}\brp\blp\prod_{\rho\notin\vec s}X_\rho\brp
    \blp\prod_{r=j+2}^{k} P^{\vec s}_{s_r}\brp
  \end{equation}
which by the previous paragraph does not have a factor of $\cE_{s_j}^{\vs}$ and 
  \[P_{s_j}^{\vs}\Big|_{Y_{\vs(j)}\leftarrow0}=A_{s_j}\cE_{s_j}\]
which trivially has at most $1$ factor of $\cE_{s_j}$.
Now we will show by induction on $r$ that $P_{s_r}^{\vs}|_{Y_{s_j}\leftarrow0}$ has
$2^{j-r-1}$ factors of $\cE_{s_j}^{\vs}$ for $i+2\leq r\leq j-1$.
From \eqref{eqn:denom P j+1} we can conclude that
  \begin{align*}
  P_{s_{j-1}}^{\vs}\Big|_{Y_{\vs(j)}\leftarrow0}
    & =\Bigg[\blp\prod_{r=1}^{j-2}A_{s_r}\brp\blp\prod_{\rho\notin\vec s}X_\rho\brp
      \blp\prod_{r=j+1}^{k} P^{\vec s}_{s_r}\brp \\
    & \qquad\cdot\lb \blp\prod_{r=1}^{j}A_{s_r}\brp
      \blp\prod_{\rho\notin\vec s}X_\rho\brp\blp\prod_{r=j+1}^{k} P^{\vec s}_{s_r}\brp
      + A_{s_{j}}Y_{\vS(j-1)}Y_{\vS(j+1)}C^{\vS}_{s_{j}}\rb\\
    & \qquad+A_{s_{j-1}}Y_{\vS(i-2)}Y_{\vS(j)}C^{\vS}_{s_{j-1}}\Bigg]_{Y_{s_j}\leftarrow0} \\
    & =A_{s_{j-1}}A_{s_j}\blp\prod_{r=1}^{j-2}A_{s_r}\brp^2\blp\prod_{\rho\notin\vec s}X_\rho\brp^2
      \blp\prod_{r=j+2}^{k} P^{\vec s}_{s_r}\brp \\
    & \qquad\cdot\lb A_{s_{j}}\blp\prod_{r=1}^{j-1}A_{s_r}\brp^2
      \blp\prod_{\rho\notin\vec s}X_\rho\brp\blp\prod_{r=j+2}^{k} P^{\vec s}_{s_r}\brp
      + A_{s_{j}}Y_{\vS(j-1)}Y_{\vS(j+1)}C^{\vS}_{s_{j}}\rb \\
    & =A_{s_{j-1}}A_{s_j}^2\blp\prod_{r=1}^{j-2}A_{s_r}\brp^2\blp\prod_{\rho\notin\vec s}X_\rho\brp^2
      \blp\prod_{r=j+2}^{k} P^{\vec s}_{s_r}\brp\cE_{s_j}^{\vs}
  \end{align*}
which contains exactly $1=2^{j-(j-1)-1}$ factor of $\cE_{s_j}$.
Then for $i+2\leq r< j-1$ we have that
  \[P_{s_r}^{\vs}\Big|_{Y_{\vs(r)}\leftarrow0}=
  \blp\prod_{\ell=1}^{r-1}A_{s_\ell}\brp\blp\prod_{\rho\notin\vec s}X_\rho\brp
  \blp\prod_{\ell=r+1}^{k} P^{\vec s}_{s_\ell}\Big|_{Y_{\vs(j)}\leftarrow0}\brp\]
and so by induction contains exactly
  \[1+\sum_{\ell=r+1}^{j-1}2^{j-\ell-1}=2^{j-r-1}\]
factors of $\cE_{s_j}^{\vs}$ (where the extra $1$ comes from the $P_{s_j}^{\vs}$ term).
Therefore, there are 
  \[2\blp1+\sum_{r=i+2}^{j-1}2^{j-r-1}\brp=2\cdot 2^{j-i-2}=2^{j-i-1}\]
factors of $\cE_{s_j}^{\vs}$ in \eqref{eqn:denom lemma}.
\end{proof}

\begin{proposition}
\label{prop:denom}
Given an activation sequence $\vec s\subset[n]$ of length $k$, if $\sigma\notin\vec s$ then
  \begin{equation}
  \label{eqn:denom notAC}
  \hat{\cE}^{\vec s}_{\sigma}/\cE^{\vec s}_{\sigma}=1
  \end{equation}
and if $s_i\in\vec s$ then 
  \begin{equation}
  \label{eqn:denom AC}
  \hat{\cE}^{\vec s}_{s_i}/\cE^{\vec s}_{s_i}=1/C^{\vec s}_{s_i}
  \end{equation}
\end{proposition}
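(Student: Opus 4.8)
The plan is to compute the $\hat{\cE}$ polynomials directly from the defining property that $\hat{\cE}^{\vec s}_{\ell}$ is obtained from $\cE^{\vec s}_{\ell}$ by multiplying by a monomial in the cluster variables $\mathbf{Y}^{\vec s}$ (other than $Y_\ell$) with nonpositive exponents, chosen so that after each substitution $Y_{\vec s(j)} \leftarrow \cE^{\vec s}_{s_j}/Y$ (resp.\ $X_\rho \leftarrow \cE^{\vec s}_\rho/X$) the result is an honest Laurent polynomial not divisible by $\cE^{\vec s}_{s_j}$ (resp.\ $\cE^{\vec s}_\rho$). Concretely, I would recall from the discussion preceding the statement that each exponent $a_\rho$ (resp.\ $b_j$) is $-$(the maximum number of times $\cE^{\vec s}_\ell|_{X_\rho \leftarrow 0}$ is divisible by $\cE^{\vec s}_\rho$) (resp.\ by $\cE^{\vec s}_{s_j}$). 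So the whole proposition reduces to two divisibility computations: one for $\sigma \notin \vec s$, one for $s_i \in \vec s$.

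For the case $\sigma \notin \vec s$, I would argue that $\cE^{\vec s}_\sigma$ is already a valid unhatted-to-hatted representative, i.e.\ that for every other cluster variable $Z$ (whether $X_\rho$ with $\rho \notin \vec s$, $\rho \neq \sigma$, or $Y_{\vec s(j)}$), setting $Z \leftarrow 0$ in $\cE^{\vec s}_\sigma$ yields something not divisible by the corresponding exchange polynomial. From \eqref{eqn:E notAC}, $\cE^{\vec s}_\sigma = (\prod_r A_{s_r})(\prod_{\rho \neq \sigma} X_\rho) + A_\sigma Y_{\vec s}$; killing any $X_\rho$ kills the first term, leaving $A_\sigma Y_{\vec s}$, which is a monomial and hence (since $\cE^{\vec s}_\rho$ is irreducible and not a monomial by LP1) not divisible by $\cE^{\vec s}_\rho$; killing $Y_{\vec s(j)}$ for $j < k$ leaves $\cE^{\vec s}_\sigma$ unchanged because it does not involve $Y_{\vec s(j)}$ for $j<k$ (only $Y_{\vec s} = Y_{\vec s(k)}$ appears), so one must only check non-divisibility by $\cE^{\vec s}_{s_k}$ directly, and setting $Y_{\vec s} \leftarrow 0$ leaves the monomial $(\prod_r A_{s_r})(\prod_{\rho\neq\sigma} X_\rho)$, again not divisible by the irreducible non-monomial $\cE^{\vec s}_{s_k}$. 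Hence all exponents are $0$ and $\hat{\cE}^{\vec s}_\sigma = \cE^{\vec s}_\sigma$.

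For the case $s_i \in \vec s$, the nontrivial content is exactly Lemma~\ref{lemma:denom lemma}: it tells us that the only cluster variable $Y_{\vec s(j)}$ whose vanishing produces a nontrivial power of the corresponding exchange polynomial in $\cE^{\vec s}_{s_i}$ is one with $i+2 \le j \le k$, and in that case $\cE^{\vec s}_{s_i}|_{Y_{\vec s(j)} \leftarrow 0}$ is divisible by $\cE^{\vec s}_{s_j} = P^{\vec s}_{s_j}$ exactly $2^{j-i-1}$ times. I would then need to also dispatch the remaining variables: $X_\rho$ for $\rho \notin \vec s$, and $Y_{\vec s(j)}$ for $j = i-1$, $j=i+1$ (and $j \le i-1$ generally, but those don't appear). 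Looking at \eqref{eqn:E AC}, setting $X_\rho \leftarrow 0$ kills the first term when $i \ge 1$ in the relevant factor... actually the cleaner route is: $\cE^{\vec s}_{s_i}|_{X_\rho \leftarrow 0} = Y_{\vec s(i-1)} Y_{\vec s(i+1)} C^{\vec s}_{s_i}$, a monomial, hence not divisible by the irreducible non-monomial $\cE^{\vec s}_\rho$; similarly $\cE^{\vec s}_{s_i}|_{Y_{\vec s(i-1)} \leftarrow 0}$ and $\cE^{\vec s}_{s_i}|_{Y_{\vec s(i+1)} \leftarrow 0}$ each equal the first (squared) term of \eqref{eqn:E AC}, which I must check is not divisible by $\cE^{\vec s}_{s_{i-1}}$ (resp.\ $\cE^{\vec s}_{s_{i+1}}$); here I'd use that $\cE^{\vec s}_{s_{i+1}} = P^{\vec s}_{s_{i+1}}$ depends on $Y_{\vec s(i)}$ while the first term of \eqref{eqn:E AC} does not (it only involves $A$'s, $X_\rho$'s and $P^{\vec s}_{s_r}$ for $r \ge i+2$, and by the argument in the proof of Lemma~\ref{lemma:denom lemma} those $P$'s do not contribute factors of $\cE^{\vec s}_{s_{i+1}}$ either), and an analogous degree/variable argument for $\cE^{\vec s}_{s_{i-1}}$. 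Combining: the only contributions come from $Y_{\vec s(j)}$, $i+2 \le j \le k$, with exponent $-2^{j-i-1}$ — which is precisely $-\deg$ of $Y_{\vec s(j)}$ in $C^{\vec s}_{s_i}$ as defined in \eqref{eqn:C AC} — so $\hat{\cE}^{\vec s}_{s_i} = \cE^{\vec s}_{s_i}/C^{\vec s}_{s_i}$.

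The main obstacle is the "not divisible by" half of each check: irreducibility of the $\cE^{\vec s}_\ell$ (part of what is being proved elsewhere, but available here as a hypothesis-level fact about the $P$'s being irreducible, used already inside Lemma~\ref{lemma:denom lemma}) combined with tracking which cluster variables each $P^{\vec s}_{s_r}$ and each term of \eqref{eqn:E AC} genuinely involves. In particular the edge cases $i = k$, $i = k-1$, and $i=1$ need separate quick inspection because several products in Definition~\ref{defn:recursive structure} become empty, but in each of those $C^{\vec s}_{s_i} = 1$ (empty product) or reduces to a single factor, and \eqref{eqn:denom AC} should come out consistently. Once the bookkeeping of exponents is matched against \eqref{eqn:C AC}, the proposition follows.
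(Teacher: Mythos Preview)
Your handling of the case $\sigma\notin\vec s$ is essentially the paper's argument, and your treatment of the variables $X_\rho$ and $Y_{\vec s(i\pm1)}$ in the $s_i$ case is fine (once the divisibility at $0$ is zero, the exponent is forced to $0$ since it is nonpositive). The real gap is in the case $s_i\in\vec s$, $i+2\le j\le k$.

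You claim, citing ``the discussion preceding the statement,'' that the exponent $b_j$ equals $-\bigl(\text{number of times }\cE^{\vec s}_{s_j}\text{ divides }\cE^{\vec s}_{s_i}\big|_{Y_{\vec s(j)}\leftarrow 0}\bigr)$. That discussion says explicitly that this is only a \emph{lower bound} for $b_j$. Writing $\cE^{\vec s}_{s_i}=\sum_\alpha e_\alpha\, Y_{\vec s(j)}^{\alpha}$, the actual exponent is
\[
-b_j \;=\; \max\bigl\{m\ge 0 : (\cE^{\vec s}_{s_j})^{\,m-\alpha}\mid e_\alpha\ \text{for all }\alpha\ge 0\bigr\},
\]
so Lemma~\ref{lemma:denom lemma}, which counts the factors of $\cE^{\vec s}_{s_j}$ in $e_0$ alone, only yields $-2^{\,j-i-1}\le b_j\le 0$. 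To pin $b_j$ down at $-2^{\,j-i-1}$ you must also show that each $e_\alpha$ with $\alpha\ge 1$ carries at least $2^{\,j-i-1}-\alpha$ factors of $\cE^{\vec s}_{s_j}$. This is the content of the second half of the paper's proof: it expands each $P^{\vec s}_{s_r}$ and $\cE^{\vec s}_{s_i}$ as polynomials in $Y_{\vec s(j)}$, introduces the coefficient arrays $p_{r,\beta}$ and $e_\alpha$, and checks these divisibilities inductively. Without that step your argument establishes only one of the two needed inequalities. (A small example shows the distinction is real: if $e_0$ were divisible by $\cE^{\vec s}_{s_j}$ twice but $e_1$ not at all, the exponent would be $-1$, not $-2$.)

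One minor correction: you write $\cE^{\vec s}_{s_j}=P^{\vec s}_{s_j}$, but by \eqref{eqn:E k AC}--\eqref{eqn:E AC} this holds only for $j=k$; for $j<k$ the two differ.
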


\begin{proof}

Fix $\vs\subset[n]$ an activation sequence of length $k$.
First note that if $\cE_{i}^{\vs}\in\bm{\cE}^{\vs}$ 
is independent of the cluster $Y_{\ell}^{\vs}\in \mathbf{Y}^{\vs}$ then
$\cE_{i}^{\vs}|_{Y_\ell^{\vs}\leftarrow0}=\cE_{i}^{\vs}$
which is irreducible in $\mathcal{P}$. Therefore the power of $Y_{\ell}^{\vs}$ in 
$\hat{\cE}^{\vec s}_{i}/\cE^{\vec s}_{i}$
is $0$. We will not discuss the proof for \eqref{eqn:denom notAC}
because the arguments follow from the simplicity of $\cE_{\sigma}^{\vs}$ for $\sigma\notin\vs$.
Furthermore, all of the arguments are similar to those used to prove \eqref{eqn:denom AC} which 
is provided below.

Next we will prove equation \eqref{eqn:denom notAC}.
Fix $\sigma\notin\vs$. 
Recall our condition that $\hat{\cE}^{\vS}_{\sigma}
|_{Y_{\ell}^{\vS}\leftarrow\cE^{\vS}_{\ell}/X}$ is in 
$$R[(Y^{\vs}_1)^{\pm1},\dots,(Y^{\vs}_{\ell-1})^{\pm1},X^{\pm1},(Y^{\vs}_{\ell+1})^{\pm1},
\dots,(Y^{\vs}_n)^{\pm1}]$$
and is not divisible by $\cE^{\vS}_{\ell}$ in 
$R[(Y^{\vs}_1)^{\pm1},\dots,(Y^{\vs}_{\ell-1})^{\pm1},X^{\pm1},(Y^{\vs}_{\ell+1})^{\pm1},
\dots,(Y^{\vs}_n)^{\pm1}]$.
Fix $j\in[n]\setminus\vS$ such that $j\neq\sigma$. Making the substitution
  $$\cE^{\vS}_{\sigma}|_{X_j\leftarrow\cE^{\vS}_{j}/X}=
    \blp\prod_{r=1}^{k}A_{s_r}\brp \blp\prod_{{\rho\notin \vec s} \atop {j\neq\rho\neq\sigma}}
    X_\rho\brp\blp\frac{\cE^{\vS}_{j}}{X}\brp+A_\sigma Y_{\vS}$$
we need to factor out $\cE^{\vS}_{j}$ the precise number of times to satisfy these two conditions
(where the well definedness of this number is checked in \cite[Lemma~2.3]{LP1}), and 
minus the number of times we can factor our $\cE^{\vS}_{j}$ is precisely the power of $X_j$ in 
$\hat{\cE}^{\vec s}_{\sigma}/\cE^{\vec s}_{\sigma}$. By its form, 
$\cE^{\vS}_{\sigma}|_{X_j\leftarrow\cE^{\vS}_{j}/X}$ is already in 
$$R[(Y^{\vs}_1)^{\pm1},\dots,(Y^{\vs}_{j-1})^{\pm1},X^{\pm1},(Y^{\vs}_{j+1})^{\pm1},
\dots,(Y^{\vs}_n)^{\pm1}],$$ 
and {\bf LP1}
ensures $\cE^{\vS}_{\sigma}|_{X_j\leftarrow\cE^{\vS}_{j}/X}$ is not divisible by $\cE^{\vS}_{j}$. Therefore
the power of $X_j$ in $\hat{\cE}^{\vec s}_{\sigma}/\cE^{\vec s}_{\sigma}$ is zero. Next, for any
$s_i\in\vS(k-1)$, $\cE_\sigma^{\vS}$ is independent of $Y_{\vS(i)}$ and so the power of $Y_{\vs(i)}$
in $\hat{\cE}^{\vec s}_{\sigma}/\cE^{\vec s}_{\sigma}$ is zero. Lastly, consider the case when $\ell=s_k$.
Making the substitution,
  $$\cE^{\vS}_{\sigma}|_{Y_{\vs}\leftarrow\cE^{\vS}_{s_k}/X}=
    \blp\prod_{r=1}^{k}A_{s_r}\brp \blp\prod_{{\rho\notin \vec s} \atop {\rho\neq\sigma}}
    X_\rho\brp+A_\sigma \blp\frac{\cE^{\vS}_{s_k}}{X}\brp$$
is once again already in 
$R[(Y^{\vs}_1)^{\pm1},\dots,(Y^{\vs}_{s_k-1})^{\pm1},X^{\pm1},(Y^{\vs}_{s_k+1})^{\pm1},
\dots,(Y^{\vs}_n)^{\pm1}]$
and not divisible by $\cE_{s_k}^{\vs}$ by {\bf LP1}, and so the factor of $Y_{\vs}$ in 
$\hat{\cE}^{\vec s}_{\sigma}/\cE^{\vec s}_{\sigma}$ is zero as well. Thus
  $$\hat{\cE}^{\vec s}_{\sigma}/\cE^{\vec s}_{\sigma}=1$$
as was desired.

As you can see in \eqref{eqn:denom notAC} and \eqref{eqn:denom AC}, the only
interesting cases arise when $\ell\in\vs$. So
suppose that $\ell=s_i\in\vs$ and let $s_j\in\vs$ with $i+2\leq j$. 
We can then write $\cE_{s_i}^{\vs}$ as a polynomials of $Y_{\vs(j)}$ with all the other 
cluster variables held ``constant''
  \[\cE_{s_i}^{\vs}=e_0+e_1Y_{\vs(j)}+e_2 Y_{\vs(j)}^2+\cdots.\]
By construction of the polynomial $\hat{\cE}_{s_i}^{\vs}$,
we have see that the power of $Y_j$ in \eqref{eqn:denom AC} is given by
  \[b_j:=-\max\{b\in\mathbb{Z}_{\geq0}\mid\mbox{$(\cE_{s_j}^{\vs})^{b-\alpha}$ can be factored from
    $e_\alpha$ for all $\alpha\in\mathbb{Z}_{\geq 0}$}\}.\]
    Then Lemma~\ref{lemma:denom lemma} gives us that $-2^{j-i-1}\leq b_j\leq 0$ because 
$e_0=\cE_{s_i}^{\vs}|_{Y_{\vs(j)}\leftarrow0}$.
In order to find a better upper bound on $b_j$ (which will eventually by $2^{j-i-1}$),
we will similarly write each $P_{s_r}^{\vs}$ as a polynomial of $Y_{\vs(j)}$
  \[P_{s_r}^{\vs}=p_{r,0}+p_{r,1}Y_{\vs(j)}+p_{r,2}Y_{\vs(j)}^2+\cdots.\]

For all $i+1\leq r\leq k$ and all $\beta\in\mathbb{Z}_{\geq0}$ 
let $\mathcal{C}_{r,k}$ be the collection of
sequences $(\gamma_\ell)=(\gamma_{r+1},\gamma_{r+2},\dots,\gamma_{k})$ over $\mathbb{Z}_{\geq 0}$
such that $\gamma_{r+1}+\cdots+\gamma_k=\beta$.
Then from \eqref{eqn:P AC} we can recursively rewrite each $p_{r,\beta}$ as
 \begin{equation}{\scriptsize\label{eqn:p expand}
  p_{r,\beta} Y_{\vs(j)}^{\beta}=\begin{cases}
  \blp\prod\limits_{\ell=1}^{r-1}A_{s_\ell}\brp\blp\prod\limits_{\rho\notin\vec s}X_\rho\brp
    \blp\sum\limits_{(\gamma_{\ell})\in\mathcal{C}_{r,\beta}} 
    \blp\prod\limits_{\ell=r+1}^{k} p_{\ell,\gamma_\ell}\brp\brp Y_{\vs(j)}^\beta
    & \mbox{$\beta\neq 2^{r-i-1}$} \\
  \blp\prod\limits_{\ell=1}^{r-1}A_{s_\ell}\brp\blp\prod\limits_{\rho\notin\vec s}X_\rho\brp
    \blp\sum\limits_{(\gamma_{\ell})\in\mathcal{C}_{r,\beta}} 
    \blp\prod\limits_{\ell=r+1}^{k} p_{\ell,\beta_\ell}\brp\brp Y_{\vs(j)}^\beta +
    A_{s_r}Y_{\vs(r-1)}Y_{\vs(r+1)}C_{s_r}^{\vs}
    & \mbox{$\beta= 2^{r-i-1}$.}\end{cases}}
  \end{equation}
Similarly, for all $\delta\in\mathbb{Z}_{\geq0}$ let
$\mathcal{B}_{\delta}$ be the collection of sequences
$(\beta_r)=(\beta_{i+2},\dots,\beta_k)$ over $\mathbb{Z}_{>0}$
such that $\beta_{i+2}+\cdots+\beta_{k}=\alpha$. Then from \eqref{eqn:E AC} we can rewrite each $e_\alpha$ as
  \begin{equation}{\scriptsize\label{eqn:e expand}
  e_{\alpha}Y_{\vs(j)}^\alpha
    = \begin{cases}
      \blp\prod\limits_{r=1}^{i-1}A_{s_r}\brp^2\blp\prod\limits_{\rho\notin\vec s}X_\rho\brp^2
      \lb\sum\limits_{\delta=0}^{\alpha}\blp\sum\limits_{(\beta_r)\in\mathcal{B}_{\delta}}
      \sum\limits_{(\beta'_r)\in\mathcal{B}_{\alpha-\delta}}
      \prod\limits_{r=i+2}^{k} p_{r,\beta_r}p_{r,\beta'_r}\brp\rb Y_{\vs(j)}^\alpha
      & \mbox{$\alpha\neq2^{j-i-1}$} \\
      \blp\prod\limits_{r=1}^{i-1}A_{s_r}\brp^2\blp\prod\limits_{\rho\notin\vec s}X_\rho\brp^2
      \lb\sum\limits_{\delta=0}^{\alpha}\blp\sum\limits_{(\beta_r)\in\mathcal{B}_{\delta}}
      \sum\limits_{(\beta'_r)\in\mathcal{B}_{\alpha-\delta}}
      \prod\limits_{r=i+2}^{k} p_{r,\beta_r}p_{r,\beta'_r}\brp\rb Y_{\vs(j)}^\alpha + 
      A_{s_i}Y_{\vS(i-1)}Y_{\vS(i+1)}C^{\vS}_{s_i}
      & \mbox{$\alpha=2^{j-i-1}$.}\end{cases}}
  \end{equation}
Note that if $r=j$ or $r\geq j+2$, then $p_{r,\beta}\neq 0$ for only $\beta=0$.
We also know that $p_{j+1,\beta}\neq 0$ for only $\beta\leq1$. Thus we can
inductively see that if $r\leq j-1$, then $p_{r,\beta}\neq0$
for only $\beta\leq 2^{j-r-1}$. Therefore, we can  inductively
see that $e_\alpha\neq0$ only for $\alpha\leq2^{j-i-1}-2$ and $\alpha=2^{j-i-1}$.
It is not difficult to see that if there are at least $b$ factors of $\ce_{s_j}^{\vs}$ 
in $p_{r,\beta}$ where $i+2\leq j\leq k$ and $0\leq\beta\leq{2^{j-r-1}}$. Therefore, 
we get that by induction each $e_\alpha$ has at least
$2^{j-i-1-\alpha}$ factors of $\ce_{s_j}^{e_j}$. Therefore, $2^{j-i-1}\leq b_j\leq 2^{j-1-1}.$
\end{proof}

\section{Seeds and mutation inside $\cA(\tau_n)$}\label{seeds and mutations}
\subsection{Seeds}
We will now begin to prove that the polynomials $\bm{\cE}^{\vs}$ given in Definition~\ref{defn:recursive structure}
are indeed the exchange polynomials $\mathbf{E}^{\vs}$.

\begin{lemma}
\label{lem:exchange polynomials}
Suppose that $\vec s\subset[n]$ is an activation sequence of length $k+1$ and write it
$\vec s=(s_1,\dots,s_k,\sigma)$. Then for all $i\in[k]$,
  \begin{equation}
  \label{eqn:lem exchange polynomials}
  {Y_{\vec s}}^{2^{i+1}-1}\blp\prod_{r=i+1}^{i}P^{\vec s(k)}_{s_{k-r}}\brp\Bigg|_{X_{\sigma}\leftarrow
  \frac{\cE^{\vec s}_{\sigma}}{Y_{\vec s}}}=\blp\prod_{r=0}^{i}P^{\vec s}_{s_{k-r}}\brp.
  \end{equation}
\end{lemma}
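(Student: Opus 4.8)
The plan is to derive \eqref{eqn:lem exchange polynomials} (reading the left-hand product as $\prod_{r=0}^{i}P^{\vec s(k)}_{s_{k-r}}$) from the single-index identity
\[
P^{\vec s}_{s_j}=Y_{\vec s}^{\,2^{k-j}}\cdot P^{\vec s(k)}_{s_j}\Big|_{X_{\sigma}\leftarrow \cE^{\vec s}_{\sigma}/Y_{\vec s}}\qquad(1\le j\le k),
\]
which I would prove by downward induction on $j$. Once this is known for each $j\in\{k-i,\dots,k\}$, multiplying the identities and using $\sum_{j=k-i}^{k}2^{k-j}=2^{i+1}-1$, together with the fact that substitution distributes over products, immediately gives $\prod_{j=k-i}^{k}P^{\vec s}_{s_j}=Y_{\vec s}^{\,2^{i+1}-1}\bigl(\prod_{j=k-i}^{k}P^{\vec s(k)}_{s_j}\bigr)\big|_{X_{\sigma}\leftarrow\cE^{\vec s}_{\sigma}/Y_{\vec s}}$, which is the statement.

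Before the induction I would record a few bookkeeping facts. Since $\sigma\notin\vec s(k)$, Proposition~\ref{prop:denom} gives $\hat\cE^{\vec s(k)}_{\sigma}=\cE^{\vec s(k)}_{\sigma}$, so the mutation at $\sigma$ is governed by $X_{\sigma}Y_{\vec s}=\cE^{\vec s(k)}_{\sigma}$; comparing \eqref{eqn:E notAC} for $\vec s(k)$ with \eqref{eqn:P k AC} and \eqref{eqn:E k AC} for $\vec s$ shows $\cE^{\vec s(k)}_{\sigma}=\cE^{\vec s}_{\sigma}$, a polynomial not involving $X_{\sigma}$, so the substitution $X_{\sigma}\leftarrow\cE^{\vec s}_{\sigma}/Y_{\vec s}$ is precisely that mutation relation. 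Next, $\prod_{\rho\notin\vec s(k)}X_{\rho}=X_{\sigma}\prod_{\rho\notin\vec s}X_{\rho}$, and splitting off the top ($r=k+1$) factor of \eqref{eqn:C AC}, with $Y_{\vec s(k+1)}=Y_{\vec s}$, yields $C^{\vec s}_{s_j}=C^{\vec s(k)}_{s_j}\,Y_{\vec s}^{\,2^{k-j}}$ for $1\le j\le k-1$. Finally, a short induction on $j$ from \eqref{eqn:P k AC} and \eqref{eqn:P AC} gives $\deg_{X_{\sigma}}P^{\vec s(k)}_{s_j}=2^{k-j}$, which is exactly the power of $Y_{\vec s}$ one must multiply by to clear denominators after the substitution, explaining the exponent $2^{i+1}-1$.

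The induction itself is then short. For $j=k$, the entry $s_k$ is terminal in $\vec s(k)$, so by \eqref{eqn:P k AC} we have $P^{\vec s(k)}_{s_k}=X_{\sigma}\blp\prod_{r=1}^{k-1}A_{s_r}\brp\blp\prod_{\rho\notin\vec s}X_{\rho}\brp+A_{s_k}Y_{\vec s(k-1)}$; substituting and multiplying by $Y_{\vec s}$ gives $\cE^{\vec s}_{\sigma}\blp\prod_{r=1}^{k-1}A_{s_r}\brp\blp\prod_{\rho\notin\vec s}X_{\rho}\brp+A_{s_k}Y_{\vec s(k-1)}Y_{\vec s}$, and since $s_k$ is \emph{not} terminal in $\vec s$, formula \eqref{eqn:P AC} together with $P^{\vec s}_{s_{k+1}}=\cE^{\vec s}_{\sigma}$, $Y_{\vec s(k+1)}=Y_{\vec s}$, and $C^{\vec s}_{s_k}=1$ shows this equals $P^{\vec s}_{s_k}$. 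For $j\le k-1$, assume the identity for all larger indices, expand $P^{\vec s(k)}_{s_j}$ by \eqref{eqn:P AC}, pull $X_{\sigma}$ out of $\prod_{\rho\notin\vec s(k)}X_{\rho}$, substitute $X_{\sigma}\leftarrow\cE^{\vec s}_{\sigma}/Y_{\vec s}$, and multiply by $Y_{\vec s}^{\,2^{k-j}}$: the $A_{s_j}$-term becomes $A_{s_j}Y_{\vec s(j-1)}Y_{\vec s(j+1)}C^{\vec s(k)}_{s_j}Y_{\vec s}^{\,2^{k-j}}=A_{s_j}Y_{\vec s(j-1)}Y_{\vec s(j+1)}C^{\vec s}_{s_j}$ by the $C$-relation, while the inductive hypothesis applied factorwise to $\prod_{r=j+1}^{k}P^{\vec s(k)}_{s_r}$, with $\sum_{r=j+1}^{k}2^{k-r}=2^{k-j}-1$ and $P^{\vec s}_{s_{k+1}}=\cE^{\vec s}_{\sigma}$, turns the remaining term into $\blp\prod_{r=1}^{j-1}A_{s_r}\brp\blp\prod_{\rho\notin\vec s}X_{\rho}\brp\prod_{r=j+1}^{k+1}P^{\vec s}_{s_r}$. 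Summing the two pieces and comparing with \eqref{eqn:P AC} for $\vec s$ at index $j$ gives $P^{\vec s}_{s_j}$, which closes the induction.

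The only real obstacle is the bookkeeping: making the powers of $Y_{\vec s}$ balance, which rests on the degree count $\deg_{X_{\sigma}}P^{\vec s(k)}_{s_j}=2^{k-j}$ and the telescoping of $\sum 2^{k-r}$, and correctly matching the recursion at the top index, where $s_k$ is terminal for $\vec s(k)$ (so \eqref{eqn:P k AC} is used) but non-terminal for $\vec s$ (so \eqref{eqn:P AC} is used, carrying the extra factor $P^{\vec s}_{s_{k+1}}=\cE^{\vec s}_{\sigma}$). Everything else is a direct substitution into Definition~\ref{defn:recursive structure}.
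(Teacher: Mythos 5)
Your proposal is correct (and you rightly read the typo'd left-hand product as $\prod_{r=0}^{i}P^{\vec s(k)}_{s_{k-r}}$), and it is essentially the paper's argument: the paper's inductive step is precisely your single-index identity $Y_{\vec s}^{2^{k-j}}\,P^{\vec s(k)}_{s_j}\big|_{X_\sigma\leftarrow\cE^{\vec s}_\sigma/Y_{\vec s}}=P^{\vec s}_{s_j}$, established by induction along the recursion of Definition~\ref{defn:recursive structure} using the mutation relation $X_\sigma Y_{\vec s}=\cE^{\vec s}_\sigma$. The only cosmetic difference is that you prove the per-factor identity directly from \eqref{eqn:P AC}, the relation $C^{\vec s}_{s_j}=C^{\vec s(k)}_{s_j}Y_{\vec s}^{2^{k-j}}$, and $P^{\vec s}_{s_{k+1}}=\cE^{\vec s}_\sigma$, then multiply the factors and telescope the exponents, whereas the paper inducts on the full product and routes the same computation through \eqref{eqn:P alt AC} and \eqref{eqn:E AC}.
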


\begin{proof}
We will prove this by induction on $i\in[k]$. We will show the base cases when $i=0$ and $i=1$.
When $i=0$, we can directly compute
  \begin{align*}
  Y_{\vec s}P^{\vec s(k)}_{s_k}\Bigg|_{X_{\sigma}\leftarrow
  \frac{\cE^{\vec s(k)}_{\sigma}}{Y_{\vec s}}}
    & =Y_{\vec s}\lb\blp\prod_{r=1}^{k-1}A_{s_r}\brp\blp\prod_{\rho\notin\vec s}X_\rho\brp 
      X_{\sigma}
      +A_{s_k}Y_{\vec s(k-1)}\rb_{X_{\sigma}\leftarrow
      \frac{\cE^{\vec s(k)}_{\sigma}}{Y_{\vec s}}} \\
    & =\blp\prod_{r=1}^{k-1}A_{s_r}\brp\blp\prod_{\rho\notin\vec s}X_\rho\brp 
      \cE^{\vec s}_{\sigma}+A_{s_k}Y_{\vec s(k-1)}Y_{\vec s} \\
    & =A_{s_k}\blp\prod_{r=1}^{k-1}A_{s_r}\brp^2\blp\prod_{\rho\notin\vec s}X_\rho\brp^2
      +A_{\sigma}Y_{\vec s(k)}\blp\prod_{r=1}^{k-1}A_{s_r}\brp
      \blp\prod_{\rho\notin\vec s}X_\rho\brp+A_{s_k}Y_{\vec s(k-1)}Y_{\vec s} \\
    & \stackrel{\eqref{eqn:E AC}}{=}A_{\sigma}Y_{\vec s(k)}\blp\prod_{r=1}^{k-1}A_{s_r}\brp
      \blp\prod_{\rho\notin\vec s}X_\rho\brp+A_{s_k}\cE^{\vec s}_{s_k} \\
    & \stackrel{\eqref{eqn:P alt AC}}{=}P^{\vec s}_{s_k}
  \end{align*}
proving \eqref{eqn:lem exchange polynomials} for $i=0$. When $i=1$ we similarly have
  {\small\begin{align*}
  {Y_{\vec s}}^3P^{\vec s(k)}_{s_k}P_{\vec s(k),s_{k-1}}\Bigg|_{X_{\sigma}\leftarrow
  \frac{\cE^{\vec s(k)}_{\sigma}}{Y_{\vec s}}}
    & =P^{\vec s}_{s_k}\lb{Y_{\vec s}}^2P^{\vec s(k)}_{s_{k-1}}\rb_{X_{\sigma}\leftarrow
      \frac{\cE^{\vec s(k)}_{\sigma}}{Y_{\vec s}}} \\
    & =P^{\vec s}_{s_k}\lb A_{s_k}Y_{\vec s(k-1)}Y_{\vec s}\cE^{\vec s(k)}_{\sigma}
      \blp\prod_{r=1}^{k-2}A_{s_r}\brp\blp\prod_{\rho\notin\vec s}X_\rho\brp\right. \\
    & \qquad\qquad\left.+A_{s_{k-1}}Y_{\vec s(k-2)}Y_{\vec s(k)}{Y_{\vec s}}^2+A_{s_{k-1}}
      (\cE^{\vec s}_{\sigma})^2\blp\prod_{r=1}^{k-2}A_{s_r}\brp^2
      \blp\prod_{\rho\notin\vec s}X_\rho\brp^2\rb \\
    & \stackrel{\eqref{eqn:E k AC}}{=}P^{\vec s}_{s_k}\lb A_{s_k}
      Y_{\vec s(k-1)}Y_{\vec s}P^{\vec s(k)}_{\sigma}
      \blp\prod_{r=1}^{k-2}A_{s_r}\brp\blp\prod_{\rho\notin\vec s}X_\rho\brp\right. \\
    & \qquad\qquad\left.+A_{s_{k-1}}Y_{\vec s(k-2)}Y_{\vec s(k-1)}\mathcal{C}^{\vec s}_{s_k}+A_{s_{k-1}}
      (P^{\vec s(k)}_{\sigma})^2\blp\prod_{r=1}^{k-2}A_{s_r}\brp^2
      \blp\prod_{\rho\notin\vec s}X_\rho\brp^2\rb \\
    & \stackrel{\eqref{eqn:E AC}}{=}P^{\vec s}_{s_k}\lb A_{s_k}
      Y_{\vec s(k-1)}Y_{\vec s}P^{\vec s}_{\sigma}
      \blp\prod_{r=1}^{k-2}A_{s_r}\brp\blp\prod_{\rho\notin\vec s}X_{\rho}\brp+
      A_{s_{k-1}}\cE^{\vec s}_{s_{k-1}}\rb \\
    & \stackrel{\eqref{eqn:P alt AC}}{=} P^{\vec s}_{s_k}P^{\vec s}_{s_{k-1}}
  \end{align*}}
proving \eqref{eqn:lem exchange polynomials} for $i=1$. Consider $i\in[k]$ with $i \ge 2$ 
and suppose that \eqref{eqn:lem exchange polynomials} holds for all $r\in[i-1]$. To prove
that \eqref{eqn:lem exchange polynomials} holds for $i$ it is sufficient to show that
  $${Y_{\vec s}}^{2^i}P^{\vec s(k)}_{s_{k-i}}\Bigg|_{X_{\sigma}\leftarrow
    \frac{\cE^{\vec s(k)}_{\sigma}}{Y_{\vec s}}}=P^{\vec s}_{s_{k-i}}.$$
From definitions and manipulation similar to that seen above we can conclude that
  \begin{align*}
  {Y_{\vec s}}^{2^i}P^{\vec s(k)}_{s_{k-i}}\Bigg|_{X_{\sigma}\leftarrow
  \frac{\cE^{\vec s(k)}_{\sigma}}{Y_{\vec s}}}
    & =A_{s_{k-i+1}}
      Y_{\vec s(k-i)}Y_{\vec s(k-i+2)}\mathcal{C}^{\vec s}_{s_{k-i+1}}
      \blp\prod_{r=1}^{k-i-1}A_{s_r}\brp
      \blp\prod_{\rho\notin\vec s}X_{\rho}\brp
      \blp\prod_{r=k-(i-2)}^kP^{\vec s(k)}_{s_r}\brp \\
    & \qquad\qquad+\lb{Y_{\vec s}}^{2^i}A_{s_{k-i}}
      \cE^{\vec s(k)}_{s_{k-i}}\rb_{X_{\sigma}\leftarrow
      \frac{\cE^{\vec s(k)}_{\sigma}}{Y_{\vec s}}}.
  \end{align*}
If we can prove that last term above is equal to $A_{s_{k-i}}\cE_{\vec s,s_{k-i}}$
then we can apply \eqref{eqn:P alt AC} and finish the proof. Therefore, 
  \begin{align*}
  {Y_{\vec s}}^{2^i}\cE^{\vec s(k)}_{s_{k-i}}\Bigg|_{X_{\sigma}\leftarrow
  \frac{\cE^{\vec s(k)}_{\sigma}}{Y_{\vec s}}}
    & =\blp\prod_{r=1}^{k-i-1}A_{s_r}\brp^2\blp\prod_{\rho\notin\vec s}X_{\rho}\brp^2
      (\cE^{\vec s(k)}_{\sigma})^2\blp{Y_{\vec s}}^{2^{i-1}-1}
      \prod_{r=k-(i-2)}^kP^{\vec s(k)}_{s_r}\Bigg|_{X_{\sigma}\leftarrow
      \frac{\cE^{\vec s(k)}_{\sigma}}{Y_{\vec s}}}\brp^2 \\
    & \qquad\qquad+Y_{\vec s(k-i-1)}Y_{\vec s(k-i+1)}\mathcal{C}^{\vec s(k)}_{s_{k-i}}{Y_{\vec s}}^{2^i} \\
    & =\blp\prod_{r=1}^{k-i-1}A_{s_r}\brp^2
      \blp\prod_{\rho\notin\vec s}X_{\rho}\brp^2 
      \blp\prod_{r=k-(i-2)}^{k+1}P^{\vec s}_{s_r}\brp^2 
      +Y_{\vec s(k-i-1)}Y_{\vec s(k-i+1)}\mathcal{C}^{\vec s}_{s_{k-i}}\\
    & =\cE^{\vec s}_{s_{k-i}},
  \end{align*}
finishing the proof by induction.
\end{proof}
With this Lemma, we can now prove that $\bm{\cE}^{\vec s}$ are precisely the exchange
polynomials in the seed $\tau_n^{\vec s}$.

\begin{theorem}
\label{thm:exchange polynomials}
Suppose that $\vec s\subset[n]$ is an activation sequence of length $k$.
The exchange polynomials of $\tau_n^{\vs}$ are $\bm{\cE}^{\vs}$. 
\end{theorem}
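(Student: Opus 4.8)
The plan is to prove the statement by induction on the length $k$ of the activation sequence $\vec s$, using the mutation rules for LP seeds. The base case $k=0$ is just the assertion that $\bm{\cE}^{\emptyset}$ equals $\mathbf F$, the exchange polynomials of $\tau_n$ itself, which one checks directly against Definition~\ref{defn:recursive structure}: for $\sigma\in[n]$ the formula \eqref{eqn:E notAC} collapses to $\prod_{\rho\neq\sigma}X_\rho$, which does not match $A_\sigma+\prod_{\sigma\to j}X_j$ at first glance, so the indexing conventions will need to be reconciled carefully (in particular how $Y_{\vec s}$ and the empty activation sequence interact).

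For the inductive step, I would write $\vec s=(s_1,\dots,s_k,\sigma)$ with $\sigma$ the last element, assume by induction that the exchange polynomials of $\tau_n^{\vec s(k)}$ are $\bm{\cE}^{\vec s(k)}$, and compute the seed $\tau_n^{\vec s}=\mu_\sigma(\tau_n^{\vec s(k)})$ directly from the mutation formulas in Section~\ref{sec:background}. There are three families of exchange polynomials to track under the mutation $\mu_\sigma$: those $\cE^{\vec s(k)}_\ell$ not involving the variable $X_\sigma$ (these are unchanged), the polynomial $\cE^{\vec s(k)}_\sigma$ itself (the new variable $Y_{\vec s}$ is $\hat{\cE}^{\vec s(k)}_\sigma/X_\sigma$, and by Proposition~\ref{prop:denom} for $\sigma\notin\vec s(k)$ we have $\hat{\cE}^{\vec s(k)}_\sigma=\cE^{\vec s(k)}_\sigma$, so this is straightforward), and those $\cE^{\vec s(k)}_\ell$ that do involve $X_\sigma$ — which is where all the work lies. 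For the third family, the mutation rule says I substitute $X_\sigma\leftarrow \hat{\cE}^{\vec s(k)}_\sigma|_{Y_\ell\leftarrow 0}/Y_{\vec s}$ into $\cE^{\vec s(k)}_\ell$, then cancel common factors with $\hat{\cE}^{\vec s(k)}_\sigma|_{Y_\ell\leftarrow 0}$, then multiply by a monomial $M$ to clear denominators and satisfy {\bf LP1}. Proposition~\ref{prop:denom} controls the first substitution, and the cancellation count is exactly what Lemma~\ref{lemma:denom lemma} was designed to supply.

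The key computational engine is Lemma~\ref{lem:exchange polynomials}, which with $i=k-1$ says precisely that $Y_{\vec s}^{2^k-1}\big(\prod_{r=0}^{k-1}P^{\vec s(k)}_{s_{k-r}}\big)$, after the substitution $X_\sigma\leftarrow \cE^{\vec s}_\sigma/Y_{\vec s}$, equals $\prod_{r=0}^{k-1}P^{\vec s}_{s_{k-r}}$; more precisely the proof of that lemma shows term by term that $Y_{\vec s}^{2^i}P^{\vec s(k)}_{s_{k-i}}|_{X_\sigma\leftarrow\cdots}=P^{\vec s}_{s_{k-i}}$ and $Y_{\vec s}^{2^i}\cE^{\vec s(k)}_{s_{k-i}}|_{X_\sigma\leftarrow\cdots}=\cE^{\vec s}_{s_{k-i}}$. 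So for each $s_i\in\vec s(k)$ I would take the formula \eqref{eqn:E AC} (or \eqref{eqn:E k AC}) for $\cE^{\vec s(k)}_{s_i}$, perform the substitution dictated by $\mu_\sigma$, use the lemma to identify the result (up to the appropriate monomial prefactor) with $\cE^{\vec s}_{s_i}$, and check that the factor $C^{\vec s}_{s_i}$ absorbed by the $\hat{}$-operation — as recorded in Proposition~\ref{prop:denom}, $\hat{\cE}^{\vec s}_{s_i}/\cE^{\vec s}_{s_i}=1/C^{\vec s}_{s_i}$ — accounts exactly for the common-factor cancellation step. I also need to verify the new polynomials satisfy {\bf LP1} and {\bf LP2}: {\bf LP2} is immediate from the shape of \eqref{eqn:E AC}–\eqref{eqn:E notAC} (no $\cE^{\vec s}_\ell$ contains $Y_\ell$), and irreducibility follows inductively since each $\cE^{\vec s}_{s_i}$ is exhibited as "something squared plus a product of distinct cluster variables", i.e. it is linear in $Y_{\vec s(i-1)}$ (say) with coprime coefficients, hence irreducible, mirroring how the proof of Lemma~\ref{lemma:denom lemma} repeatedly invokes irreducibility of $\cE^{\vec s}_{s_r}$.

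The main obstacle I expect is bookkeeping rather than conceptual: matching the exponents $2^{r-i-1}$ appearing in $C^{\vec s}_{s_i}$ against the exponents of $Y_{\vec s}$ that the substitution $X_\sigma\leftarrow\cE^{\vec s}_\sigma/Y_{\vec s}$ introduces into each $P^{\vec s(k)}_{s_r}$ and $\cE^{\vec s(k)}_{s_r}$, and keeping the re-indexing consistent when $\vec s(k+1)$ is extended (the shift $s_{k-i}\mapsto s_{k-i}$ relative to a sequence one longer, and how $P^{\vec s}_{s_k}=\cE^{\vec s}_{s_k}$ versus the generic $P^{\vec s}_{s_i}$ with its extra $P$-product factor). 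In effect Lemma~\ref{lem:exchange polynomials} has already done the hardest algebra; the remaining task is to run the mutation definition carefully enough to see that the monomial $M$ and the $\hat{}$-normalization conspire to produce exactly the stated $\cE^{\vec s}_\ell$ with no leftover unit ambiguity, which is why the Remark in Section~\ref{sec:background} flags that an explicit consistent choice of $F'_j$ is being made here.
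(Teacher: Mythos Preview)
Your approach is essentially the paper's: induction on $k$, with Lemma~\ref{lem:exchange polynomials} doing the heavy lifting of identifying $Y_{\vec s}^{2^{k-i}}\cE^{\vec s(k)}_{s_i}|_{X_\sigma\leftarrow\cE^{\vec s(k)}_\sigma/Y_{\vec s}}$ with $\cE^{\vec s}_{s_i}$ once denominators are cleared. One small correction to your bookkeeping: for $s_i\in\vec s(k-1)$ the common-factor cancellation step is trivial (so $H=G$), because $\hat\cE^{\vec s(k)}_{s_{k+1}}|_{Y_{\vec s(i)}\leftarrow 0}=\cE^{\vec s(k)}_{s_{k+1}}$ is irreducible by the inductive {\bf LP1} hypothesis; neither Lemma~\ref{lemma:denom lemma} nor the $C^{\vec s}_{s_i}$ factor plays a role in this cancellation---those are used only inside Proposition~\ref{prop:denom} to compute hats, and the only hat you need here is $\hat\cE^{\vec s(k)}_{s_{k+1}}=\cE^{\vec s(k)}_{s_{k+1}}$ from the easy case \eqref{eqn:denom notAC}.
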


\begin{proof}
We will prove this theorem by induction on the length $k$ of the activation sequence $\vec s$. Since
this theorem trivially holds when $k=0$ and we have our initial seed $\tau_n$, we must start by proving
that the theorem holds when $k=1$. Consider the activation sequence $(s_1)\subset[n]$. Since 
$F_{s_1}\in\mathbf{F}$ is independent of $X_{s_1}$, it follows that
$E^{(s_1)}_{s_1}=F_{s_1}=\cE^{(s_1)}_{s_1}$. Consider $\sigma\in[n]$ such that $\sigma$ and $s_1$ are distinct. Now
we must mutate $F_\sigma$ at $s_1$. Let
  $$G^{(s_1)}_{\sigma}=F_{\sigma}\Bigg|_{X_{s_1}\leftarrow\frac{\hat{F}_{s_1}|_{X_\sigma\leftarrow
    0}}{Y_{(s_1)}}}=\blp\prod_{{\rho\notin\vec (s_1)}
    \atop{\rho\neq\sigma}}X_\rho\brp\blp\frac{A_{s_1}}{Y_{(s_1)}}\brp+A_\sigma$$
which has no common factors with $A_{s_1}=F_{s_1}|_{X_\sigma\leftarrow0}$, and so we have $H^{
(s_1)}_{\sigma}=G^{(s_1)}_{\sigma}$. Clearing the denominators by multiplying $H^{(s_1)}_{\sigma}$
by the cluster variable $Y_{(s_1)}$ we get that
  $$E^{(s_1)}_{\sigma}=A_{s_1}\blp\prod_{{\rho\notin(s_1)}\atop{\rho\neq\sigma}}
    X_\rho\brp+A_\sigma Y_{(s_1)}\stackrel{\eqref{eqn:E notAC}}{=}\cE^{(s_1)}_{\sigma}.$$
    
Suppose that for all activation sequences of length $k$ that our theorem holds. Let
$\vec s=(s_1,\dots,s_{k+1})\subset [n]$ be an activation sequence of length $k+1$. Since the 
subactivation sequence $\vec s(k)$ is of length $k$, we are assuming that $\tau_n^{\vec s(k)}=
\big(\mathbf{Y}^{\vec s(k)},\bm{\cE}^{\vec s(k)}\big)$. Thus, we must show that
$\mu_{s_{k+1}}\big(\mathbf{Y}^{\vec s(k)},\bm{\cE}^{\vec s(k)}\big)
=\big(\mathbf{Y}^{\vec s},\bm{\cE}^{\vec s}\big)$ 
in order to complete the induction. First notice that $E^{\vec s}_{s_{k+1}}=\cE^{\vec s(k)}_{s_{k+1}}=
\cE^{\vec s}_{s_{k+1}}$ because $\cE^{\vec s(k)}_{s_{k+1}}$ is independent of $X_{s_{k+1}}$.
Also note the computation that $E^{\vec s}_{s_{k}}=\cE^{\vec s}_{s_{k}}$ is done in the same manner as above
in the base case.
Next, consider $\sigma\in[n]$ such that $\sigma\notin\vec s$, and then 
mutate $\cE^{\vec s(k)}_{\sigma}$ at $s_{k+1}$. Let
  $$G^{\vec s}_{\sigma}=\cE^{\vec s(k)}_{\sigma}\Bigg|_{X_{s_{k+1}}
    \leftarrow\frac{\hat{\cE}^{\vec s(k)}_{s_{k+1}}|_{X_\sigma\leftarrow0}}{Y_{\vec s}}}.$$
From Proposition~\ref{prop:denom} we know that $\hat{\cE}^{\vec s(k)}_{s_{k+1}}=\cE^{\vec s(k)}_{
s_{k+1}}$. Then 
  $$G^{\vec s}_{\sigma}=\blp\prod_{r=1}^{k}A_{s_r}\brp\blp\prod_{{\rho\notin \vec s} \atop
    {\rho\neq\sigma}}X_\rho\brp\blp\frac{A_{s_{k+1}}Y_{\vec s(k)}}{Y_{\vec s}}
    \brp+A_\sigma Y_{\vec s(k)}.$$
which has $Y_{\vec s(k)}$ as its only common factor with $A_{s_{k+1}}Y_{\vec s(k)}=
\cE^{\vec s(k)}_{s_{k+1}}|_{X_{\sigma}\leftarrow0}$ and so $H^{\vec s}_{\sigma}=G^{\vec s}_{\sigma}
/Y_{\vec s(k)}$. Clearing the denominator by multiplying by $Y_{\vec s}$ we get
  $$E^{\vec s}_{\sigma}=A_{s_{k+1}}\blp\prod_{r=1}^{k}A_{s_r}\brp\blp\prod_{{\rho\notin \vec s} \atop
    {\rho\neq\sigma}}X_\rho\brp+A_\sigma Y_{\vec s}=\cE^{\vec s}_{\sigma}.$$
Lastly, consider $s_{i}\in \vec s(k-1)$, and let
 {\small\begin{align}
 \notag
 G^{\vec s}_{s_i}
   & =\cE^{\vec s(k)}_{s_i}\Bigg|_{X_{s_{k+1}}
     \leftarrow\frac{\hat{\cE}^{\vec s(k)}_{s_{k+1}}|_{Y_{\vec s(i)}\leftarrow0}}{Y_{\vec s}}} \\
   & =\left[\blp\prod_{r=1}^{i-1}A_{s_r}\brp^2\blp\prod_{\rho\notin\vec s(k)}X_\rho\brp^2
     \blp\prod_{r=i+2}^{k} P^{\vec s(k)}_{s_r}\brp^2+A_{s_i}Y_{\vec s(i-1)}T_{\vec s(i+1)}
     \mathcal{C}^{\vec s(k)}_{s_i} 
     \right]\Bigg|_{X_{s_{k+1}}
     \leftarrow\frac{\hat{\cE}^{\vec s(k)}_{s_{k+1}}|_{Y_{\vec s(i)}\leftarrow0}}{Y_{\vec s}}}.
     \label{eqn:thm:exchange polynomial AC}
 \end{align}}
Note that there are two factors of $X_{s_{k+1}}$ on the first term and no factors in the second
term in \eqref{eqn:thm:exchange polynomial AC}. Once the 
substitution is made \eqref{eqn:thm:exchange polynomial AC} is not divisible by
$\hat{\cE}^{\vec s(k)}_{s_{k+1}}|_{Y_{\vec s(i)}\leftarrow0}$.
Since $\hat{\cE}^{\vec s(k)}_{s_{k+1}}|_{Y_{\vec s(i)}\leftarrow0}=\cE^{\vec s(k)}_{s_{k+1}}$, which
is an exchange polynomial,
by inductive hypothesis, it must satisfy {\bf LP1} and has no factors other than units and itself. 
Therefore, $G^{\vec s}_{s_i}$ has no common factors with $\hat{\cE}^{\vec s(k)}_{s_{k+1}}|_{Y_{\vec s(i)}\leftarrow0}$ and so we can let
  $$H^{\vec s}_{s_i}=G^{\vec s}_{s_i}.$$
To finish mutating $\tau_n^{\vec s(k)}$ at $s_{k+1}$ we must now clear any denominators in 
$H^{\vec s}_{s_i}$. Applying Lemma~\ref{lem:exchange polynomials} we can achieve 
this by multiplying by ${Y_{\vec s}}^{2^{k-i}}$. Thus
  {\small\begin{align*}
  E^{\vec s}_{s_i}
    & ={Y_{\vec s}}^{2^{k-i}}H^{\vec s}_{s_i} \\
    & ={Y_{\vec s}}^{2^{k-i}}
      \left[\blp\prod_{r=1}^{i-1}A_{s_r}\brp^2\blp\prod_{\rho\notin\vec s(k)}X_\rho\brp^2
      \blp\prod_{r=i+2}^{k} P^{\vec s(k)}_{s_r}\brp^2+A_{s_i}
      Y_{\vec s(i-1)}Y_{\vec s(i+1)}\mathcal{C}^{\vec s(k)}_{s_i} 
      \right]\Bigg|_{X_{s_{k+1}}
      \leftarrow\frac{\hat{\cE}^{\vec s(k)}_{s_{k+1}}|_{Y_{\vec s(i)}\leftarrow0}}{Y_{\vec s}}}. \\
    & ={Y_{\vec s}}^{2^{k-i}-1}
      \blp\prod_{r=1}^{i-1}A_{s_r}\brp^2\blp\prod_{\rho\notin\vec s}X_\rho\brp^2
      {\cE^{\vec s}_{s_{k+1}}}^2\blp\prod_{r=i+2}^k P^{\vec s(k)}_{s_r}\brp^2+A_{s_i}
      Y_{\vec s(i-1)}Y_{\vec s(i+1)}\mathcal{C}^{\vec s(k)}_{s_i}{Y_{\vec s}}^{2^{k-i}} \\
    & =\blp\prod_{r=1}^{i-1}A_{s_r}\brp^2\blp\prod_{\rho\notin\vec s}X_\rho\brp^2
      \blp\prod_{r=i+2}^{k+1} P^{\vec s}_{s_r}\brp^2+A_{s_i}
      Y_{\vec s(i-1)}Y_{\vec s(i+1)}\mathcal{C}^{\vec s}_{s_i} \\
    & =\cE^{\vec s}_{s_i},
  \end{align*}}
finishing the proof by induction.
\end{proof}

\begin{corollary}
\label{cor:exchange polynomials}
Suppose that $\vec s\subset[n]$ is an activation sequence of length $k$, then the cluster variable
$Y_{\vec s}$ in the seed $\tau_n^{\vec s}=\big(\mathbf{Y}^{\vec s},\mathbf{E}^{\vec s}\big)$
can be explicitly written in terms of the initial cluster variables
$X_\ell\in\mathbf{X}$; namely,
  \begin{equation}
  \label{eqn:cor exchange polynomials}
  Y_{\vec s}=\frac{\sum\limits_{i=1}^{k}\lb
  \blp\prod\limits_{{j=1}\atop{j\neq i}}^kA_{s_j}\brp
  \blp\prod\limits_{{j=1}\atop{j\neq s_i}}^nX_j\brp\rb 
  +\blp\prod\limits_{i=1}^{k}A_{s_i}\brp}{\blp\prod\limits_{i=1}^k X_{s_i}\brp }.
  \end{equation}
Thus, due to symmetry, $Y_{\vec s}=Y_{\vec s'}$ if and only if $s=s'$.
\end{corollary}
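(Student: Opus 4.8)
The plan is to prove Corollary~\ref{cor:exchange polynomials} by induction on the length $k$ of the activation sequence $\vec s$, piggybacking on the explicit description of the exchange polynomials $\bm{\cE}^{\vec s}$ established in Theorem~\ref{thm:exchange polynomials} and the denominator computations from Proposition~\ref{prop:denom}. The base case $k=1$ is immediate: mutating $\tau_n$ at $s_1$ gives $Y_{(s_1)} = \hat F_{s_1}/X_{s_1} = (A_{s_1} + \prod_{j\neq s_1} X_j)/X_{s_1}$ (since $F_{s_1} = A_{s_1} + \prod_{s_1 \to j} X_j$ and $\hat F_{s_1} = F_{s_1}$ here), which is exactly the claimed formula with $k=1$. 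For the inductive step, write $\vec s = (s_1,\dots,s_{k+1})$ and recall that $Y_{\vec s} = \mu_{s_{k+1}}(Y_{s_{k+1}}^{\vec s(k)}) = \hat{\cE}^{\vec s(k)}_{s_{k+1}} / X_{s_{k+1}}$, where $X_{s_{k+1}}$ is the cluster variable in the slot $s_{k+1}$ of the seed $\tau_n^{\vec s(k)}$ (note $s_{k+1} \notin \vec s(k)$, so that variable is still the initial $X_{s_{k+1}}$). By Proposition~\ref{prop:denom}, $\hat{\cE}^{\vec s(k)}_{s_{k+1}} = \cE^{\vec s(k)}_{s_{k+1}}$ since $s_{k+1} \notin \vec s(k)$, and by \eqref{eqn:E k AC} together with \eqref{eqn:P k AC},
\[
\cE^{\vec s(k)}_{s_{k+1}} = \blp\prod_{r=1}^{k}A_{s_r}\brp\blp\prod_{\rho\notin\vec s}X_\rho\brp + A_{s_{k+1}} Y_{\vec s(k)}.
\]
So $Y_{\vec s} = \dfrac{1}{X_{s_{k+1}}}\lb \blp\prod_{r=1}^{k}A_{s_r}\brp\blp\prod_{\rho\notin\vec s}X_\rho\brp + A_{s_{k+1}} Y_{\vec s(k)} \rb$, and then I would substitute the inductive formula for $Y_{\vec s(k)}$, put everything over the common denominator $\prod_{i=1}^{k+1} X_{s_i}$, and verify that the numerator telescopes into the claimed sum $\sum_{i=1}^{k+1}\lb \prod_{j\neq i} A_{s_j} \cdot \prod_{j \neq s_i} X_j \rb + \prod_{i=1}^{k+1} A_{s_i}$. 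Concretely: $\prod_{i=1}^k A_{s_i} \cdot \prod_{\rho \notin \vec s} X_\rho$ contributes the $i = k+1$ term of the new sum after multiplying by $X_{s_{k+1}}/X_{s_{k+1}}$, while $A_{s_{k+1}}$ times the inductive numerator contributes the terms $i = 1,\dots,k$ plus the all-$A$ product; the denominators match since $\prod_{\rho\notin\vec s}X_\rho = \prod_{j\neq s_i}^n X_j \big/ \prod_{r\neq i} X_{s_r}$ accounts correctly once multiplied through.

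Having established the displayed formula \eqref{eqn:cor exchange polynomials}, the final sentence "$Y_{\vec s}=Y_{\vec s'}$ if and only if $s=s'$" follows by inspection of the right-hand side. The "if" direction: both the numerator and the denominator of \eqref{eqn:cor exchange polynomials} depend only on the \emph{set} $\{s_1,\dots,s_k\}$ and not on the order — the denominator $\prod_i X_{s_i}$ is manifestly symmetric, and in the numerator the sum $\sum_i \big(\prod_{j\neq i} A_{s_j}\big)\big(\prod_{j\neq s_i} X_j\big)$ is a sum over the index set running through all of $\{s_1,\dots,s_k\}$, hence symmetric, as is $\prod_i A_{s_i}$. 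So $s = s'$ forces $Y_{\vec s} = Y_{\vec s'}$. For the "only if" direction, I would argue that the underlying set $s$ can be recovered from the rational function $Y_{\vec s}$: the denominator of $Y_{\vec s}$ in lowest terms (as an element of $\mathcal F = \Frac(R)(X_1,\dots,X_n)$) is exactly $\prod_{i \in s} X_i$. To see the fraction is already in lowest terms, note the numerator is not divisible by any $X_{s_i}$ — setting $X_{s_i} = 0$ in the numerator kills every term except the single summand $\big(\prod_{j \neq i} A_{s_j}\big)\big(\prod_{j \neq s_i} X_j\big)$, which is a nonzero monomial in the remaining variables — hence no $X_{s_i}$ cancels. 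Therefore $\{X_i : i \in s\}$, and thus $s$, is determined by $Y_{\vec s}$, so $Y_{\vec s} = Y_{\vec s'}$ implies $s = s'$.

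The main obstacle I anticipate is purely bookkeeping in the inductive step: correctly tracking which "empty product" conventions and index ranges are in play (the paper's convention that $\prod_{r=a}^{b} = 1$ when $a > b$, and the shift between $\prod_{\rho\notin\vec s}$ and $\prod_{j\neq s_i}^n$), and making sure that after clearing denominators the numerator genuinely reorganizes term-by-term into the target expression rather than merely being equal "on the nose" after further simplification. I expect the cleanest route is to multiply the identity $X_{s_{k+1}} Y_{\vec s} = \cE^{\vec s(k)}_{s_{k+1}}$ through by $\prod_{i=1}^{k}X_{s_i}$ (the denominator of $Y_{\vec s(k)}$) so that both sides become honest polynomials in $R[X_1,\dots,X_n]$, then match monomials; this avoids any division and makes the telescoping transparent. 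Everything else — the base case, the symmetry, and the lowest-terms argument — is routine given Theorem~\ref{thm:exchange polynomials} and Proposition~\ref{prop:denom}.
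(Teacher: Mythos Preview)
Your proposal is correct and follows essentially the same inductive route as the paper: both the base case and the recursion $Y_{\vec s} = \cE^{\vec s(k)}_{s_{k+1}}/X_{s_{k+1}}$ (via Proposition~\ref{prop:denom}) followed by substituting the inductive formula for $Y_{\vec s(k)}$ are exactly what the paper does. One small slip in your lowest-terms argument: setting $X_{s_i}=0$ in the numerator also leaves the term $\prod_{i} A_{s_i}$ alive, not just the single summand you named --- but the sum of those two surviving terms is still nonzero, so your conclusion that no $X_{s_i}$ cancels stands.
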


\begin{proof}
We will prove this by induction on $k$, the length of the activation sequence. First 
consider the case where $\vec s=(s_1)$. The cluster variable $Y_{(s_1)}$ is defined to be
$\mu_{s_1}(X_{(s_1)})$, and therefore 
\eqref{equation:mutating variables}
gives us that 
  $$Y_{(s_1)}=\frac{\hat{F}_{s_1}}{X_{s_1}}=\frac{\blp
    \prod\limits_{\rho\notin(s_1)}X_\rho\brp+A_{s_1}}{X_{s_1}}$$
which agrees with \eqref{eqn:cor exchange polynomials}. 

Now suppose that 
\eqref{eqn:cor exchange polynomials} holds for all activation sequences of length $k$. Let 
$\vec s\subset[n]$ be an activation sequence of length $k+1$. Then $Y_{\vs}$ is defined to be
$\mu_{s_k}(Y_{\vs(k-1)})$, and therefore \eqref{equation:mutating variables} and Proposition~\ref{prop:denom} 
gives us that
  $$Y_{\vec s}=\frac{\cE^{\vec s(k)}_{s_{k+1}}}{X_{s_{k+1}}}=
    \frac{\blp\prod\limits_{r=1}^{k}A_{s_r}\brp\blp\prod\limits_{\rho\notin \vec s(k)}
    X_\rho\brp+A_{s_{k+1}} Y_{\vec s(k)}}{X_{s_{k+1}}}.$$
Since $\vec s(k)$ has length $k$, we can substitute \eqref{eqn:cor exchange polynomials}
for $Y_{\vec s(k)}$ above
  \begin{align*}
  Y_{\vec s}
    & =\frac{\blp\prod\limits_{r=1}^{k}A_{s_r}\brp\blp\prod\limits_{\rho\notin \vec s}
      X_\rho\brp}{X_{s_{k+1}}}\frac{\blp\prod\limits_{\rho\in\vec s(k)} X_{\rho}\brp}{
      \blp\prod\limits_{\rho\in\vec s(k)} X_{\rho}\brp}
      +\frac{A_{s_{k+1}}}{X_{s_{k+1}}}\frac{\sum\limits_{i=1}^{k}\lb
      \blp\prod\limits_{{j=1}\atop{j\neq i}}^kA_{s_j}\brp
      \blp\prod\limits_{{j=1}\atop{j\neq s_i}}^nX_j\brp\rb 
      +\blp\prod\limits_{i=1}^{k}A_{s_i}\brp}{\blp\prod\limits_{i=1}^k X_{s_i}\brp} \\
    & =\frac{\blp\prod\limits_{{j=1}\atop{j\neq k+1}}^{k+1}A_{s_j}\brp
      \blp\prod\limits_{{j=1}\atop{j\neq s_{k+1}}}^nX_j\brp}{
      \blp\prod\limits_{i=1}^{k+1} X_{s_i}\brp}
      +\frac{\sum\limits_{i=1}^{k}\lb
      \blp\prod\limits_{{j=1}\atop{j\neq i}}^{k+1}A_{s_j}\brp
      \blp\prod\limits_{{j=1}\atop{j\neq s_i}}^nX_j\brp\rb 
      +\blp\prod\limits_{i=1}^{k+1}A_{s_i}\brp}{\blp\prod\limits_{i=1}^{k+1} X_{s_i}\brp} \\
    & =\frac{\sum\limits_{i=1}^{k+1}\lb
      \blp\prod\limits_{{j=1}\atop{j\neq i}}^{k+1}A_{s_j}\brp
      \blp\prod\limits_{{j=1}\atop{j\neq s_i}}^nX_j\brp\rb 
      +\blp\prod\limits_{i=1}^{k+1}A_{s_i}\brp}{\blp\prod\limits_{i=1}^{k+1} X_{s_i}\brp},
  \end{align*}
finishing the proof by induction.
\end{proof}

\subsection{Mutation}

Throughout this section we will use the following notation:
If $\vs\subset[n]$ is an activation sequence of length $k$, then for all $\ell\in[n]$
we will define the following activation sequence $\mu_\ell(\vs)$ by
  $$\mu_{\ell}(\vs)=\begin{cases}
    (s_1,\dots,s_k,\sigma) & \text{if $\ell=\sigma\notin\vS$,} \\
    \vs(k-1) & \text{if $\ell=s_k\in\vS$,} \\
    (s_1,\dots,s_{j-1},s_{j+1},s_{j},\dots,s_k) & \text{if $\ell=s_j\in\vS(k-1)$.}
  \end{cases}$$
This mutation action on sequences was introduced in the discussion following Theorem~\ref{thm:main theorem}, and we are now formalizing it. We want to now show that our seeds mutate properly, because as we have already mentioned, one
of the steps needed to prove Theorem~\ref{thm:main theorem} is to show that $\mu_{\ell}(\tau_n^{\vs})=\tau_n^{\mu_{\ell}(\vs)}$.
The proof of this will require a theorem of Lam and Pylyavskyy \cite[Theorem~2.4]{LP2}.

\begin{theorem}
\label{thm:mutation commute}
Let $t=(\mathbf{x},\mathbf{F})$ be a seed and $i\neq j$ be two indicies be such that 
$x_j$ does not appear in $F_i$ and such that $F_i/F_j$ is not a unit in $R$. Then the mutations at $i$ and $j$ commute. 
More precisely, we can choose seed mutations so that $$\mu_i(\mu_j(t))=\mu_j(\mu_i(t)).$$
\end{theorem}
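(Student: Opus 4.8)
The plan is to prove the identity by directly comparing the two seeds $\mu_i(\mu_j(t))$ and $\mu_j(\mu_i(t))$ using the explicit mutation formulas recalled in Section~\ref{sec:background}. Since a mutation alters only the cluster variable and exchange polynomials indexed by the mutated position, both composite seeds already agree with $t$ outside of positions $i$ and $j$. So it suffices to show that the cluster variables $X_i^{\ast},X_j^{\ast}$ and the exchange polynomials $F_i^{\ast},F_j^{\ast}$ produced at those two positions agree, up to units in $R$, regardless of the order in which we mutate.

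The structural observation that drives everything is: because $X_j$ does not occur in $F_i$ and $F_i/F_j$ is not a unit, $F_i$ and $F_j$ are non-associate irreducibles in $\mathcal{P}$, and then condition~(2) in the definition of $\hat F_i$ forces the exponent of $X_j$ in $\hat F_i$ to be $0$; that is, $\hat F_i$ does not involve $X_j$ at all. Consequently, mutating at $j$ leaves $F_i$ untouched (since $X_j\notin F_i$), and recomputing the exchange Laurent polynomial of $F_i$ in the seed $\mu_j(t)$ returns the \emph{same} $\hat F_i$: the only exponents that could change are those of the variables $X_k$ with $k\neq i,j$ for which $F_k$ involves $X_j$, but in that case both $F_k$ and its mutated form $F_k^{(j)}$ involve a variable ($X_j$, respectively the new variable $X_j'=\hat F_j/X_j$) that does not occur in the $X_j$-free polynomial $F_i|_{X_k\leftarrow 0}$, so neither can divide it and the relevant exponent is $0$ in both seeds. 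By the uniqueness of $\hat F_i$ this yields $X_i^{\ast}=\hat F_i/X_i$ whether or not we mutate at $j$ first. The same reasoning applies verbatim with $i$ and $j$ swapped \emph{provided} $X_i\notin F_j$, and in that fully symmetric situation one also reads off directly from the mutation formula that each $F_k^{\ast}$ is order-independent.

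This reduces the theorem to the asymmetric case $X_i\in F_j$ (while still $X_j\notin F_i$). Now mutating at $i$ genuinely changes $F_j$ to some $F_j'$, so the identity $X_j^{\ast}=\hat F_j/X_j$ is no longer automatic: in $\mu_j(\mu_i(t))$ the variable at position $j$ is $\widehat{F_j'}/X_j$, with $\widehat{F_j'}$ computed in the seed $\mu_i(t)$, and one must show this equals $\hat F_j/X_j$ as an element of the ambient field $\mathcal{F}$. The mechanism is that the substitution $X_i\leftarrow \hat F_i|_{X_j\leftarrow 0}/X_i'$ introduces factors of $F_i$ into $F_j'$, and the extra negative powers of $X_i'=\hat F_i/X_i$ that then occur in $\widehat{F_j'}$ cancel exactly against the monomial used to clear denominators, so that after clearing and dividing by $X_j$ one recovers $\hat F_j/X_j$ unchanged in $\mathcal{F}$. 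Pushing this cancellation through, together with the parallel verification that the remaining exchange polynomials $F_k^{\ast}$ — in particular those depending on both $X_i$ and $X_j$, and the ones at positions $i$ and $j$ themselves — are order-independent, is the main obstacle: it is a finite but delicate computation with the explicit formulas for $\hat F$, $G$, $H$, and the normalizing monomial $M$. This is precisely the content of \cite[Theorem~2.4]{LP2}, which we invoke.
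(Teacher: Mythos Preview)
The paper does not prove this theorem at all: it is quoted verbatim as \cite[Theorem~2.4]{LP2} and used as a black box. Your proposal, after a heuristic sketch, likewise ends by invoking \cite[Theorem~2.4]{LP2} for the substantive part, so at the level of what is actually established you and the paper agree --- the result is cited, not proved. The preliminary discussion you give (that $X_j\notin F_i$ forces the $X_j$-exponent in $\hat F_i$ to vanish, whence $\hat F_i$ and $X_i^{\ast}$ are unaffected by $\mu_j$, and that the symmetric case $X_i\notin F_j$ is then immediate) is correct and is a useful orientation, but since you explicitly defer the asymmetric case and the verification for the remaining $F_k^{\ast}$ to the cited reference, your write-up is not an independent proof and should not be presented as one.
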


By the recursive structures given in Definition~\ref{defn:recursive structure}, for any activation sequence $\vs\subset[n]$ of
length $k$, we have that $Y_{\vs(j)}$ does not appear
in $\cE_{s_i}^{\vs}$ if $1\leq j\leq i-2\leq k$. This will allow us to simplify our task greatly as it will allow us
to reduce the number of possible mutations we have to consider.

\begin{theorem}\label{thm:mutating seeds}
Suppose $\vs\subset[n]$ is an activation sequence of size $k$. Then 
  \begin{equation}\label{eqn:thm mutating seeds}
  \mu_\ell(\tau_n^{\vs})=\tau_n^{\mu_\ell(\vs)}
  \end{equation}
for any $\ell\in[n]$. In other words, mutating seeds in $\mathcal{A}(\tau_n)$ corresponds to mutating activation sequences.
\end{theorem}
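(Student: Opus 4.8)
The plan is to prove Theorem~\ref{thm:mutating seeds} by reducing the three cases of $\mu_\ell$ to a small number of "local" mutation computations, using Theorem~\ref{thm:mutation commute} to commute away all the irrelevant mutations. First I would fix an activation sequence $\vs=(s_1,\dots,s_k)$ and analyze each of the three cases for $\ell$. The case $\ell=\sigma\notin\vs$ is essentially already done: by definition $\tau_n^{\mu_\sigma(\vs)}=\mu_\sigma(\tau_n^{\vs})$ is exactly how the seed generated by $(s_1,\dots,s_k,\sigma)$ is built, so here there is nothing to prove beyond unwinding notation (and appealing to Theorem~\ref{thm:exchange polynomials} to know the exchange polynomials match). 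The case $\ell=s_k$ is almost as easy: since $\mu_{s_k}$ is an involution on seeds (Proposition~2.10 of \cite{LP1}, as recalled in the excerpt) and $\tau_n^{\vs}=\mu_{s_k}(\tau_n^{\vs(k-1)})$ by construction, we get $\mu_{s_k}(\tau_n^{\vs})=\tau_n^{\vs(k-1)}=\tau_n^{\mu_{s_k}(\vs)}$ immediately.

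The substantive case is $\ell=s_j\in\vs(k-1)$, where we must show $\mu_{s_j}(\tau_n^{\vs})=\tau_n^{(s_1,\dots,s_{j-1},s_{j+1},s_j,\dots,s_k)}$, i.e.\ that transposing two adjacent entries of the activation sequence corresponds to a single mutation. The strategy is: the two seeds $\tau_n^{\vs}$ and $\tau_n^{\vs'}$ (where $\vs'$ is the swapped sequence) are both obtained from $\tau_n^{\vs(j-1)}$ by first mutating at $s_j$ then at $s_{j+1}$ (respectively $s_{j+1}$ then $s_j$), followed by the identical further mutations $\mu_{s_{j+2}},\dots,\mu_{s_k}$. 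So it suffices to prove the $j=k-1$ case, namely that $\mu_{s_{k-1}}\mu_{s_k}\mu_{s_{k-1}}(\tau_n^{\vs(k-2)})$ equals $\mu_{s_k}\mu_{s_{k-1}}(\tau_n^{\vs(k-2)})$ after renaming --- more precisely, that $\mu_{s_{k-1}}(\tau_n^{\vs})=\tau_n^{(s_1,\dots,s_{k-2},s_k,s_{k-1})}$. To see the general $j$ reduces to this, I would use Theorem~\ref{thm:mutation commute}: after establishing the remark made just before the theorem statement (that $Y_{\vs(j)}$ does not appear in $\cE_{s_i}^{\vs}$ when $j\le i-2$), the mutation $\mu_{s_j}$ commutes past $\mu_{s_{j+2}},\dots,\mu_{s_k}$ --- one needs to check the non-unit hypothesis $\cE_{s_i}^{\vs}/\cE_{s_j}^{\vs}$ is not a unit, which follows from the explicit distinct forms of these irreducible polynomials in Definition~\ref{defn:recursive structure}. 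Pushing $\mu_{s_j}$ to the right past all later mutations lands us in the situation where it acts on $\tau_n^{\vs(j+1)}$ as a mutation at the second-to-last activated index, which is exactly the reduced case.

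For the reduced case $\mu_{s_{k-1}}(\tau_n^{\vs})$ with $\vs$ of length $k$, I would do the computation directly: mutate the seed $\tau_n^{\vs}=(\mathbf{Y}^{\vs},\bm\cE^{\vs})$ at $s_{k-1}$ using the mutation recipe from Section~\ref{sec:background}, i.e.\ compute $\hat\cE^{\vs}_{s_{k-1}}$ via Proposition~\ref{prop:denom} (which gives the denominator $C^{\vs}_{s_{k-1}}$), form the new cluster variable $Y_{\mu_{s_{k-1}}(\vs)}=\hat\cE^{\vs}_{s_{k-1}}/Y_{\vs(k-1)}$, and check via Corollary~\ref{cor:exchange polynomials} that this equals $Y_{(s_1,\dots,s_{k-2},s_k,s_{k-1})}$ --- this is where the symmetry in the formula \eqref{eqn:cor exchange polynomials} (only the underlying set matters, up to the position structure) does the work. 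Then I would verify that each mutated exchange polynomial $\mu_{s_{k-1}}(\cE^{\vs}_\ell)$ matches $\cE^{(s_1,\dots,s_{k-2},s_k,s_{k-1})}_\ell$, splitting on whether $\ell\notin\vs$, $\ell=s_{k-1}$, $\ell=s_k$, or $\ell=s_i$ with $i\le k-2$; the computations will closely parallel those in the proof of Theorem~\ref{thm:exchange polynomials}, with Lemma~\ref{lem:exchange polynomials}-style substitution identities handling the clearing of denominators. I expect the main obstacle to be the bookkeeping in this last verification: matching the $P^{\vs}$-towers and the $C^{\vs}$ exponents before and after the swap, since the indices $i+1,i+2,\dots$ shift in a way that is not symmetric in $s_{k-1}$ and $s_k$, so the equality relies on a genuine (if mechanical) identity among the recursively-defined polynomials rather than a formal symmetry.
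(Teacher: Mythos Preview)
Your proposal is correct and follows essentially the same route as the paper: the cases $\ell\notin\vs$ and $\ell=s_k$ are immediate (the latter by the involution property, which you make explicit and the paper glosses over), Theorem~\ref{thm:mutation commute} is used to commute $\mu_{s_j}$ past the later mutations and reduce to the $j=k-1$ case, and that case is handled by a direct computation of the new cluster variable via Corollary~\ref{cor:exchange polynomials} and a case-by-case check of the exchange polynomials. Your anticipated obstacle is exactly the one the paper works through, treating $\ell=s_k,s_{k-2}$ by hand and then $s_j$ for $j\le k-3$ by an induction on the $P^{\vs}$-products.
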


\begin{proof}
We begin by noting that Theorem~\ref{thm:mutation commute} allows us to reduce this problem in the following way.
By definition of seeds generated by an activation sequences, we have that
  \begin{equation}\label{eqn:mutating notAC}
  \mu_\sigma(\tau_n^{\vs})=\tau_n^{\mu_\sigma(\vs)}
  \end{equation}
for all $\sigma\notin\vs$. Consider any $s_i\in\vs$. Then for all $i+2\leq j\leq k$, we have that $Y_{\vs(i)}$ does
not appear in $\ce_{s_j}^{\vs(j+1)}$, and so Theorem~\ref{thm:mutation commute} allows us to conclude that
  \begin{equation}\label{eqn:mutation commute}
  \mu_{s_i}(\tau_n^{\vs})=\mu_{s_i}\mu_{s_k}\cdots\mu_{s_{i+2}}(\tau^{\vs(i+1)})=\mu_{s_k}\cdots\mu_{s_{i+2}}\mu_{s_i}
    (\tau_n^{\vs(i+1)}).
  \end{equation}
Therefore, if we were to prove that $\mu_{s_{k-1}}(\tau_n^{\vs})=\tau^{\mu_{s_{k-1}}(\vs)}$ for any activation sequence
$\vs\subset[n]$ of length $k$, then we have also proven \eqref{eqn:thm mutating seeds} for all $\ell$ because
of \eqref{eqn:mutating notAC} and \eqref{eqn:mutation commute}. 
For simplicity we will write $\vs'=(s'_1,\dots,s'_k)=\mu_{s_{k-1}}(\vs)$ and so $s'_r=s_r$ for all $r\neq k,k-1$ and
$s'_{k-1}=s_k$ and $s'_k=s_{k-1}$. 

First we will show that $\mu_{s_{k-1}}(\mathbf{Y}^{\vs})=\mathbf{Y}^{\vs'}$.
Note that the respective underlying sets $s(j)$ and $s'(j)$ of $\vs(j)$ and $\vs'(j)$ are the same for all $j\neq k-1$.
Then we can use \eqref{equation:mutating variables} and Corollary~\ref{cor:exchange polynomials} and get that
$\mu_{s_{k-1}}(Y_{\vs(j)})=Y_{\vs'(j)}$ for all $j\neq k-1$. For $j=k-1$ we can use 
\eqref{eqn:cor exchange polynomials} and simply algebra to find that
  $$Y_{\vs'(k-1)}Y_{\vs(k-1)}-Y_{\vs(k-2)}Y_{\vs}=\blp\prod_{r=1}^{k-2}A_{s_r}\brp^2\blp\prod_{\rho\notin\vs}X_\rho
    \brp^2.$$
Therefore, $Y_{\vs'(k-1)}=\hat{\ce}_{s_{k-1}}^{\vs}/Y_{\vs(k-1)}$ which is $\mu_{s_{k-1}}(Y_{\vs(k-1)})$ by 
\eqref{equation:mutating variables}, and so $\mu_{s_{k-1}}(\mathbf{Y}^{\vs})=\mathbf{Y}^{\vs'}$.

Now we want to show that $\mu_{s_{k-1}}(\bm{\ce}^{\vs})=\bm{\ce}^{\vs'}$. Since $\ce_{s_{k}}^{\vs}$ and $\ce_{\sigma}^{\vs}$
do not contain $Y_{\vs(k-1)}$ for all $\sigma\notin\vs$, we have that $\ce_{s_{k-1}}^{\vs}$ and every $\ce_{\sigma}^{\vs}$
is unaffected by mutation at $s_{k-1}$ and so $\mu_{s_{k-1}}(\ce_{s_{k-1}}^{\vs})=\ce_{s'_{k-1}}^{\vs'}$
and $\mu_{s_{k-1}}(\ce_{\sigma}^{\vs})=\ce_\sigma^{\vs'}$. We can directly compute $\mu_{s_{k-1}}(\ce_{s_k}^{\vs})$ by looking at
  \begin{align*}
  \ce_{s_{k}}^{\vs}\Bigg|{}_{Y_{\vs(k-1)}\leftarrow \frac{\hat \ce_{s_{k-1}}^{\vs}|_{Y_{\vs}\leftarrow0}}{Y_{\vs'(k-1)}}}
    & =\blp\prod_{r=1}^{k-1}A_{s_r}\brp\blp\prod_{\rho\notin\vs}X_\rho\brp+
      A_{s_k}\blp\prod_{r=1}^{k-2}A_{s_r}\brp^2\blp\prod_{\rho\notin\vs}X_\rho\brp^2Y_{\vs'(k-1)}^{-1} \\
    & =\blp\prod_{r=1}^{k-2}A_{s_r}\brp\blp\prod_{\rho\notin\vs}X_\rho\brp\lb
      \blp\prod_{r=1}^{k-1}A_{s'_r}\brp\blp\prod_{\rho\notin\vs'}X_\rho\brp+A_{s'_k}Y_{\vs'(k-1)}\rb Y_{\vs'(k-1)}^{-1} \\
    & =\blp\prod_{r=1}^{k-2}A_{s_r}\brp\blp\prod_{\rho\notin\vs}X_\rho\brp\ce_{s'_{k-1}}^{\vs'}Y_{\vs'(k-1)}^{-1}.
  \end{align*}
Dividing out any common factors this has with 
  $$\hat\ce_{s_{k-1}}^{\vs}\Big|_{Y_{s_k}\leftarrow0}=\blp\prod_{r=1}^{k-2}A_{s_r}
    \brp^2\blp\prod_{\rho\notin\vs}X_\rho\brp^2$$
and clearing the denominator $Y_{\vs'(k-1)}$, we have that $\mu_{s_{k-1}}(\ce_{s_k}^{\vs})=\ce_{s'_{k-1}}^{\vs'}$.
We can also similarly compute  $\mu_{s_{k-1}}(\ce_{s_{k-2}}^{\vs})$ by 
looking at{\scriptsize
  \begin{align*}
  \ce_{s_{k-2}}^{\vs}\Bigg|{}_{Y_{\vs(k-1)}\leftarrow \frac{\hat \ce_{s_{k-1}}^{\vs}|_{Y_{\vs(k-2)}\leftarrow0}}{Y_{\vs'(k-1)}}}
    & =\blp\prod_{r=1}^{k-3}A_{s_r}\brp^2\blp\prod_{\rho\notin\vs}X_\rho\brp^2
      \lb\blp\prod_{r=1}^{k-2}A_{s_r}\brp\blp\prod_{\rho\notin\vs}X_\rho\brp
      +A_{s_k}\blp\prod_{r=1}^{k-2}A_{s_r}\brp^2\blp\prod_{\rho\notin\vs}X_\rho\brp Y_{\vs'(k-2)}^{-1}\rb^2 \\
    & \qquad\qquad +Y_{\vs(k-3)}\blp\prod_{r=1}^{k-2}A_{s_r}\brp^2\blp\prod_{\rho\notin\vs}X_\rho\brp Y_{\vs'(k-2)}^{-2}Y_{\vs}^2 \\
    & =\blp\prod_{r=1}^{k-2}A_{s_r}\brp^2\blp\prod_{\rho\notin\vs}X_\rho\brp^2
      \lb\blp\prod_{r=1}^{k-3}A_{s_r}\brp^2\blp\prod_{\rho\notin\vs}X_\rho\brp^2
      \lb A_{s_k}\blp\prod_{r=1}^{k-2}A_{s_r}\brp^2\blp\prod_{\rho\notin\vs}X_\rho\brp\right.\right.\\
    & \qquad\qquad+A_{s_{k-1}}Y_{\vs'(k-1)}\Bigg]^2
      +Y_{\vs(k-3)}Y_{\vs'(k-1)}Y_{\vs}^2\Bigg] Y_{\vs'(k-1)}^{-2} \\
    & =\frac{\blp\prod\limits_{r=1}^{k-2}A_{s_r}\brp^2\blp\prod\limits_{\rho\notin\vs}X_\rho\brp^2}{Y_{\vs'(k-1)}^2}\ce_{s'_{k-2}}^{\vs'}.
  \end{align*}}
Dividing out any common factors this has with $\hat{\ce}_{s_{k-1}}|_{Y_{\vs(k-2)}}$ and clearing the denominator, we have that
$\mu_{s_{k-1}}(\ce_{s_{k-1}}^{\vs})=\ce_{s'_{k-2}}^{\vs'}$.

Now we consider the cases of $\mu_{s_{k-1}}(\ce_{s_{j}}^{\vs})$ with $j\leq k-3$. We will prove the theorem by induction on $j$.
For $j=i-2$, we have that $\ce_{s_i}^{\vs}|_{Y_{\vs(j)}\leftarrow0}=\ce_{s_i}^{\vs}$ which will simplify the calculations.
We also have that
  $$P_{s_k}^{\vs}P_{s_{k-1}}^{\vs}\Bigg|{}_{Y_{\vs(k-1)}\leftarrow \frac{\hat \ce_{s_{k-1}}^{\vs}}{Y_{\vs'(k-1)}}}=
    \frac{\blp\prod\limits_{r=1}^{k-2}A_{s_r}\brp^2\blp\prod\limits_{\rho\notin\vs}X_\rho\brp^2+Y_{\vs(k-2)}Y_{\vs}}{Y_{\vs'(k-1)}^2}P_{s'_k}^{\vs'}
    P_{s'_{k-1}}^{\vs'}.$$
Then
  $$\ce_{s_{k-3}}^{\vs}\Bigg|{}_{Y_{\vs(k-1)}\leftarrow \frac{\hat \ce_{s_{k-1}}^{\vs}}{Y_{\vs'(k-1)}}}=
    \frac{\left(\blp\prod\limits_{r=1}^{k-2}A_{s_r}\brp^2\blp\prod\limits_{\rho\notin\vs}X_\rho\brp^2+Y_{\vs(k-2)}Y_{\vs}\right)^2}{Y_{\vs'(k-1)}^4}
    \ce_{s'_{k-3}}^{\vs'}$$
and so by removing common factor with $\hat{\ce}_{s_{k-1}}^{\vs}$ and clearing the denominators we have that
$\mu_{s_{k-1}}(\ce_{s_{k-3}}^{\vs})=\ce_{s'_{k-3}}^{\vs'}$. The induction argument is analogous to this one and therefore is left out.
Thus we have proven by induction that for all $j\leq k-3$, $\mu_{s_{k-1}}(\ce_{s_{j}}^{\vs})=\ce_{s'_{j}}^{\vs'}$. Hence we have shown that
\eqref{eqn:thm mutating seeds} holds for all possible values of $\ell\in[n]$.
\end{proof}

\begin{corollary}\label{cor:seed mutation}
The seeds in $\cA(\tau_n)$ are in bijection with activation sequences $\vs\subset[n]$.
Specifically, there are exactly $2^n+n-1$ cluster variables in $\cA(\tau_n)$, and there are exactly
$\sum_{k=0}^{n}n!/k!$ seeds in $\cA(\tau_n)$.
\label{corollary:combinitoric structure}
\end{corollary}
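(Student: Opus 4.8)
The plan is to assemble the corollary from the two structural results already in place. First I would note that Theorem~\ref{thm:exchange polynomials} guarantees that every activation sequence $\vs\subset[n]$ produces a genuine seed $\tau_n^{\vs}=(\mathbf{Y}^{\vs},\bm{\cE}^{\vs})$ of $\cA(\tau_n)$, while Theorem~\ref{thm:mutating seeds} shows that any mutation of such a seed is again of this form, namely $\mu_\ell(\tau_n^{\vs})=\tau_n^{\mu_\ell(\vs)}$, with $\mu_\ell(\vs)$ once more an activation sequence. Since $\cA(\tau_n)$ is generated by the initial seed $\tau_n=\tau_n^{\varnothing}$, every seed is obtained from $\tau_n$ by a finite sequence of mutations; by induction on the number of mutations, the set of seeds of $\cA(\tau_n)$ is exactly $\{\tau_n^{\vs}\mid \vs\subset[n] \text{ an activation sequence}\}$. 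Hence $\vs\mapsto\tau_n^{\vs}$ is a surjection from activation sequences onto seeds.

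Next I would check that this surjection is injective. Recall from Section~\ref{sec:recursive} that the cluster of $\tau_n^{\vs}$, for $\vs$ of length $k$, is $\{X_\sigma\mid\sigma\notin\vs\}\cup\{Y_{\vs(r)}\mid 1\le r\le k\}$. By \eqref{eqn:cor exchange polynomials} each $Y_{\vs(r)}$ has nontrivial denominator $\prod_{i=1}^{r}X_{s_i}$, so it is not a polynomial and in particular not an initial cluster variable; thus the non-initial cluster variables occurring in $\tau_n^{\vs}$ are precisely the $Y_{\vs(r)}$, and by Corollary~\ref{cor:exchange polynomials} each $Y_{\vs(r)}$ is determined by, and determines, the underlying set $s(r)$. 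If $\tau_n^{\vs}=\tau_n^{\vs'}$, comparing the sets of non-initial cluster variables and matching them by size (the sets $s(1),\dots,s(k)$ have distinct cardinalities $1,\dots,k$) forces $k=k'$ and $s(r)=s'(r)$ for every $r$; since $s_r$ is the unique element of $s(r)\setminus s(r-1)$, this gives $\vs=\vs'$. Therefore seeds of $\cA(\tau_n)$ are in bijection with activation sequences.

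Finally I would do the counting. The non-initial cluster variables are the $Y_{\vs}$, which by Corollary~\ref{cor:exchange polynomials} are in bijection with their underlying sets, i.e.\ with the nonempty subsets of $[n]$; together with the $n$ initial variables $X_1,\dots,X_n$ this yields $n+(2^n-1)=2^n+n-1$ cluster variables. For the seeds, an activation sequence of length $k$ is an ordered $k$-element subset of $[n]$, of which there are $n!/(n-k)!$; summing over $0\le k\le n$ and reindexing $k\mapsto n-k$ gives $\sum_{k=0}^{n} n!/(n-k)! = \sum_{k=0}^{n} n!/k!$ seeds.

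I do not anticipate a real obstacle here, since the statement is a direct consequence of Theorems~\ref{thm:exchange polynomials} and \ref{thm:mutating seeds} together with Corollary~\ref{cor:exchange polynomials}. The only point that warrants a moment's care is the injectivity of $\vs\mapsto\tau_n^{\vs}$; the key observation making it routine is that the nested chain $s(1)\subsetneq s(2)\subsetneq\cdots\subsetneq s(k)$ carried by the non-initial cluster variables of $\tau_n^{\vs}$ remembers the order in which the elements of $s$ were activated.
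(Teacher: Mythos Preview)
Your argument is correct and follows the same route as the paper: surjectivity via Theorem~\ref{thm:mutating seeds}, the cluster-variable count via Corollary~\ref{cor:exchange polynomials}, and the seed count by enumerating activation sequences. The paper's own proof is terser and does not spell out injectivity of $\vs\mapsto\tau_n^{\vs}$; your nested-chain argument using the cardinalities of the underlying sets $s(1)\subsetneq\cdots\subsetneq s(k)$ is a welcome addition that fills in what the paper leaves implicit.
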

\begin{proof} Theorem~\ref{thm:mutating seeds} tells us that when you mutate a sequence generated by a activation sequence $\vs\subset[n]$, the resulting seed is generated by the mutated activation sequence. Therefore, there are $\sum_{k=0}^n\frac{n!}{k!}$
seeds in $\mathcal{A}(\tau_n)$. 
Looking at Corollary \ref{cor:exchange polynomials} the non-initial cluster variables in $\cA(\tau_n)$ are in bijection with the non-empty subsets of $[n]$, giving us $2^n + n -1$ total cluster variables. For the number of seeds, 
\end{proof}

\section{Conclusion}\label{sec:conclusion}
When we began this paper, we had the goal of proving that the combinitorial structure of our Product Laurent phenomenon algebra was the same (up to isomorphism) as the Linear Laurent phenomenon algebra's, see \cite{LP2}. Now we are going to revisit our main theorem as well as describe some conjectures and future directions for research.

\begin{maintheorem}
Let $\mathcal{A}(\tau_{n})$ be the normalized LP algebra 
generated by initial seed $\tau_{n}$. Similarly, let $\mathcal{A}(t_{n})$ 
be the normalized LP algebra generated by the initial $t_{n}$. Then then the respective exchange graphs
of $\mathcal{A}(\tau_{n})$ and $\mathcal{A}(t_{n})$ are isomorphic.
\end{maintheorem}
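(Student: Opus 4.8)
The plan is to produce the isomorphism by identifying \emph{both} exchange graphs with one common combinatorial model. Let $\mathcal{G}_n$ be the graph whose vertex set is the collection of all activation sequences $\vS\subset[n]$ and whose edges join $\vS$ to $\mu_\ell(\vS)$ for every $\ell\in[n]$, with $\mu_\ell$ the mutation action on sequences defined in Section~\ref{seeds and mutations}. This is a well-defined undirected graph: the ``append'' move $\vS\mapsto(s_1,\dots,s_k,\sigma)$ and the ``truncate'' move $\vS\mapsto\vS(k-1)$ are mutually inverse, and each ``swap'' move $\vS\mapsto(s_1,\dots,s_{j-1},s_{j+1},s_j,\dots,s_k)$ with $j\le k-1$ is an involution, so ``$\vS'=\mu_\ell(\vS)$ for some $\ell$'' is a symmetric relation on vertices.

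Next I would pin down the $\cA(\tau_n)$ side. By Corollary~\ref{cor:seed mutation} (which rests on Theorem~\ref{thm:exchange polynomials} and Corollary~\ref{cor:exchange polynomials}) the assignment $\vS\mapsto\tau_n^{\vS}$ is a bijection from activation sequences onto the seeds of $\cA(\tau_n)$, and by Theorem~\ref{thm:mutating seeds} we have $\mu_\ell(\tau_n^{\vS})=\tau_n^{\mu_\ell(\vS)}$ for every $\ell\in[n]$. Hence $\{\tau_n^{\vS},\tau_n^{\vS'}\}$ is an edge of the exchange graph of $\cA(\tau_n)$ exactly when $\vS'=\mu_\ell(\vS)$ for some $\ell$, so $\vS\mapsto\tau_n^{\vS}$ is a graph isomorphism from $\mathcal{G}_n$ onto the exchange graph of $\cA(\tau_n)$. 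On the $\cA(t_n)$ side, \cite[Theorem~5.1]{LP2}, as recalled in Example~\ref{example:complete graph}, says that $\vS\mapsto t_n^{\vS}$ is likewise a bijection onto the seeds of $\cA(t_n)$ and that mutation is governed by the identical three moves on sequences; the one routine point to verify is that the indexing conventions agree, i.e.\ in both algebras mutating at $\sigma\notin\vS$ appends, mutating at the last activated element $s_k$ truncates, and mutating at $s_j$ with $j\le k-1$ transposes $s_j$ with $s_{j+1}$, which is precisely what the two displayed case splits assert. Therefore $\vS\mapsto t_n^{\vS}$ is an isomorphism from $\mathcal{G}_n$ onto the exchange graph of $\cA(t_n)$, and composing gives the desired isomorphism $\tau_n^{\vS}\mapsto t_n^{\vS}$ between the two exchange graphs; as a byproduct the seed and cluster-variable counts of Corollary~\ref{cor:seed mutation} match those of \cite{LP2}.

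The genuine content is entirely upstream of this statement, and the main obstacle is Theorem~\ref{thm:mutating seeds}, which I am assuming: one must show that mutating the seed $\tau_n^{\vS}$ truly realizes the combinatorial moves $\mu_\ell$. Using Theorem~\ref{thm:mutation commute} this collapses to the single case $\ell=s_{k-1}$, but that case still demands the explicit comparison of the recursive families $\bm{\cE}^{\vS}$ and $\bm{\cE}^{\vS'}$ (together with the cluster-variable identity from Corollary~\ref{cor:exchange polynomials}) carried out there. Once Theorems~\ref{thm:exchange polynomials} and~\ref{thm:mutating seeds} are in hand, the isomorphism of exchange graphs is a purely formal matching of two combinatorial descriptions, as above.
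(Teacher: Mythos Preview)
Your proposal is correct and follows essentially the same approach as the paper: both arguments deduce the isomorphism by invoking Corollary~\ref{cor:seed mutation}, Corollary~\ref{cor:exchange polynomials}, and Theorem~\ref{thm:mutating seeds} on the $\cA(\tau_n)$ side and the description from \cite{LP2} recalled in Example~\ref{example:complete graph} on the $\cA(t_n)$ side, matching the two combinatorial descriptions in terms of activation sequences. Your version is a bit more explicit in naming the intermediate graph $\mathcal{G}_n$ and checking that the edge relation is symmetric, but this is elaboration rather than a different strategy.
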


\begin{proof}
The proof of this Theorem lies in Corollary~\ref{cor:seed mutation}, Corollary~\ref{cor:exchange polynomials}, and
Theorem~\ref{thm:mutating seeds}. The structure of $\mathcal{A}(\tau_n)$ is that described in these three results is the same as
the structure of $\mathcal{A}(t_n)$ given in \cite{LP2}. Therefore, their exchange graphs are isomorphic.
\end{proof}

In addition to having the same exchange graphs, the product and sum cases are similar in that they both have recursive structures in their exchange polynomials. In the product case, that structure appeared in the form of our recursively defined polynomials $P_{\vec{S}(i)}$. We are particularly interested in studying LP algebras with such structure, because they can provide insight into the following conjecture posed in \cite{LP1}

\begin{conjecture}
For any finite-type LP algebra, the set of cluster monomials forms a linear basis of the subring $\mathcal{A}$. A cluster monomial is a product of powers of variables $X_1^{a_1}\dots X_n^{a_n}$ where $X_1,\dots,X_n$ are in the same seed and $a_1,\dots,a_n \in \mathbb{Z}_{\ge 0}$.
\end{conjecture}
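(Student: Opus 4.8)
The statement is the conjunction of two claims — that cluster monomials \emph{span} $\mathcal{A}$ over $R$, and that they are \emph{$R$-linearly independent} — and the plan is to prove the two by independent mechanisms. The preliminary observation, valid for any LP algebra and not just the finite-type ones, is that after fixing a base seed $t_0=(\mathbf{X},\mathbf{F})$ the Laurent phenomenon embeds $\mathcal{A}$ into $\mathcal{L}_0=R[X_1^{\pm1},\dots,X_n^{\pm1}]$, which carries a $\mathbb{Z}^n$-grading $\deg X_i=e_i$ once the coefficients are assigned compensating degrees making the exchange polynomials of $t_0$ homogeneous; for $\mathcal{A}(\tau_n)$ one takes $\deg A_\ell=\mathbf{1}-e_\ell$, so that $\cE^{(s_1)}_{s_1}=A_{s_1}+\prod_{j\neq s_1}X_j$ is homogeneous of degree $\mathbf{1}-e_{s_1}$. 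This common graded overring is the backbone of the independence argument, while the exchange relations drive the spanning argument.

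\textbf{Spanning.} Here I would show that every product of cluster variables drawn from arbitrarily many seeds rewrites as an $R$-linear combination of genuine cluster monomials, by induction on a complexity statistic — say $\sum\mathrm{cd}(Z,W)$ over unordered pairs of cluster variables occurring in the product, with $\mathrm{cd}$ a compatibility degree that vanishes on pairs lying in a common seed (a nonnegative integer statistic since in finite type there are finitely many cluster variables). If the statistic is positive there is an incompatible pair $X_i,X_i'$ joined by a single mutation, and the relation $X_iX_i'=\hat{\cE}_i$ lets one trade the pair for the exchange Laurent polynomial. The crux of this half is a \textbf{support lemma}: each $\hat{\cE}_i$ is an $R$-linear combination of cluster monomials whose complexity against the remaining factors is strictly smaller, so that the rewrite strictly decreases the statistic. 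For graph and binomial-graph LP algebras the support lemma is checkable from the explicit formulas — Definition~\ref{defn:recursive structure}, Proposition~\ref{prop:denom}, Corollary~\ref{cor:exchange polynomials} — and at $\tau_n$ itself one has $\hat{\cE}_i=\cE_i=A_i+\prod_{j\neq i}X_j$, literally a sum of two cluster monomials. Iterating the rewrite terminates by descent.

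\textbf{Independence.} Assign to each cluster variable $Z$ its $g$-vector $g(Z)\in\mathbb{Z}^n$, its multidegree in the grading above, and extend additively to cluster monomials; one first checks that mutation preserves homogeneity — a short computation with the mutation formula, since $X_i\mapsto(\hat{\cE}_i|_{X_j\leftarrow0})/X_i'$ is degree-preserving — after which additivity of $g$ is automatic. The entire claim then reduces to the \textbf{Key Lemma} that $g$ is injective on the set of cluster monomials. Granting it, a vanishing relation $\sum_m c_m m=0$ with $c_m\in R$ splits into its homogeneous components $\sum_{g(m)=v}c_m m=0$, each of which has at most one summand, whence $c_m m=0$ and so $c_m=0$ since $m$ is a nonzero element of a domain. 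For $\mathcal{A}(\tau_n)$ and $\mathcal{A}(t_n)$ the Key Lemma should follow from Corollary~\ref{cor:exchange polynomials}, which exhibits $g(Y_{\vec s})$ as an explicit vector depending only on the underlying set $s$; a cluster monomial's $g$-vector then records, with multiplicities, a face of the extended nested complex of $K_n$, and injectivity becomes the combinatorial statement that this data is recovered from the vector — provable, plausibly, by the activation-sequence bookkeeping of Sections~\ref{sec:recursive}--\ref{seeds and mutations}.

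\textbf{Main obstacle.} The hard part is the Key Lemma for an \emph{arbitrary} finite-type LP algebra: there the exchange polynomials are unconstrained beyond {\bf LP1}--{\bf LP2}, no natural $\mathbb{Z}^n$-grading need exist, and one would have to define $g$-vectors recursively through the mutation of the $\hat{\cE}_i$ and prove sign-coherence-type control on them — the LP analogue of the $c$-vector and scattering-diagram input that makes the cluster-algebra case work, for which no machinery yet exists. A realistic program is therefore: (i) prove the support lemma and the Key Lemma for the recursively structured families of this paper, thereby establishing the conjecture for $\mathcal{A}(\tau_n)$ and $\mathcal{A}(t_n)$; (ii) isolate the structural hypothesis — existence of a grading homogenizing one seed, plus a uniform support bound on the $\hat{\cE}_i$ — under which the two arguments above go through verbatim; and (iii) leave the fully general conjecture as the remaining core, now pinpointed as injectivity of the recursively defined $g$-vector.
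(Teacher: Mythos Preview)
The statement you are addressing is presented in the paper as a \emph{conjecture} (quoted from \cite{LP1}), not a theorem; the paper contains no proof of it and does not claim one. What the paper offers in its place is a second conjecture --- that a product of two cluster variables from different clusters can be rewritten as a product of two from a single cluster plus a monomial in $R$ --- together with the hope that iterating such swaps would yield spanning, provided termination can be shown via ``a useful grading for $\mathcal{A}$.'' Your ``support lemma'' is a more flexible version of the same swap-and-descend idea, and your compatibility-degree statistic is precisely the sort of grading the paper's last paragraph gestures at.

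Your write-up is candid that it is a program rather than a proof: the Key Lemma (injectivity of $g$ on cluster monomials) and the support lemma are both left open, and you correctly flag the general Key Lemma as the core obstruction. Even restricted to $\mathcal{A}(\tau_n)$, neither lemma is actually established here. The grading $\deg A_\ell=\mathbf{1}-e_\ell$ does make every cluster variable homogeneous --- this follows from the explicit formula in Corollary~\ref{cor:exchange polynomials}, giving $g(Y_{\vec s(r)})=r\mathbf{1}-2\sum_{j\le r}e_{s_j}$ --- so $g$-vectors are well-defined; but injectivity on cluster monomials across all seeds is a genuine combinatorial claim about these vectors and is not proved. Likewise, the support lemma --- that each $\hat\cE^{\vec s}_\ell$ expands into cluster monomials of strictly smaller complexity --- is asserted to be ``checkable from the explicit formulas'' but not checked against the recursive $P^{\vec s}_{s_r}$ and $C^{\vec s}_{s_i}$ of Definition~\ref{defn:recursive structure}. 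So there is nothing to compare against: the conjecture is open in the paper, and your outline, while a reasonable roadmap aligned with the paper's own suggested attack, leaves its two load-bearing lemmas unproved even in the special case at hand.
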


The presence of recursiveness in exchange polynomials helps work with, and relate, variables in different clusters, so we predict that it would be easier to check this conjecture in cases like the product and sum algebras. To prove it in our product case, we hope to prove the following lemma:

\begin{conjecture} The product of two variables in different clusters can be written as the product of two variables in the same cluster plus some monomial in the base ring.
\end{conjecture}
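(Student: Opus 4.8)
The plan is to run everything off the closed formula for cluster variables in Corollary~\ref{cor:exchange polynomials}. For a nonempty $s\subseteq[n]$ let $Y_s$ denote the cluster variable attached to any activation sequence with underlying set $s$ (well defined by \eqref{eqn:cor exchange polynomials}), set $Y_\emptyset:=1$, and write $A_s=\prod_{i\in s}A_i$, $X_s=\prod_{i\in s}X_i$, $X_{[n]}=\prod_{\ell=1}^{n}X_\ell$, $\Sigma_s=\sum_{i\in s}1/(A_iX_i)$. A one-line rearrangement of \eqref{eqn:cor exchange polynomials} puts $Y_s$ in the factored form $Y_s=(A_s/X_s)\lp 1+X_{[n]}\Sigma_s\rp$. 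From the description of seeds in Example~\ref{example:complete graph} (established by Corollary~\ref{cor:seed mutation} and Theorem~\ref{thm:mutating seeds}), two non-initial cluster variables $Y_s,Y_t$ share a cluster exactly when $s\subseteq t$ or $t\subseteq s$, and $X_i$ shares a cluster with $Y_t$ exactly when $i\notin t$; so the pairs ``in different clusters'' that must be handled are the incomparable pairs $Y_s,Y_t$ together with the pairs $X_i,Y_t$ with $i\in t$.

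For incomparable $s,t$ I would first prove a lattice-Ptolemy relation. Since $A_sA_t=A_{s\cap t}A_{s\cup t}$ and $X_sX_t=X_{s\cap t}X_{s\cup t}$ the prefactors of $Y_sY_t$ and $Y_{s\cap t}Y_{s\cup t}$ agree, and the elementary identities $\Sigma_s+\Sigma_t=\Sigma_{s\cap t}+\Sigma_{s\cup t}$ and $\Sigma_s\Sigma_t-\Sigma_{s\cap t}\Sigma_{s\cup t}=\Sigma_{s\setminus t}\Sigma_{t\setminus s}$ then give, after expanding the factored forms,
\[
Y_sY_t-Y_{s\cap t}Y_{s\cup t}=\frac{A_sA_t\,X_{[n]}^{2}}{X_sX_t}\,\Sigma_{s\setminus t}\Sigma_{t\setminus s}.
\]
Expanding $\Sigma_{s\setminus t}\Sigma_{t\setminus s}$ as a sum over $i\in s\setminus t$, $j\in t\setminus s$, one checks term by term that each summand of the right-hand side is an element of $R$ times a genuine monomial in $X_1,\dots,X_n$, i.e.\ an $R$-linear combination of cluster monomials of the initial seed, while $Y_{s\cap t}Y_{s\cup t}$ is a cluster monomial because $s\cap t\subseteq s\cup t$. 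When $s$ and $t$ are \emph{adjacent}, say $s=r\cup\{a\}$, $t=r\cup\{b\}$ with $a\neq b$ and $a,b\notin r$, this sum has the single term $\lp A_r\prod_{\ell\notin s\cup t}X_\ell\rp^{2}$, which lies in the base ring precisely when $s\cup t=[n]$; this is the shape asserted in the statement. The mixed case is a shorter computation of the same kind: for $i\in t$ one obtains $X_iY_t=A_iY_{t\setminus i}+A_{t\setminus i}\prod_{\ell\notin t}X_\ell$, and here $Y_{t\setminus i}$ together with all $X_\ell$, $\ell\notin t$, lie in a common cluster.

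With the two-variable case in hand, I would conclude that any product of cluster variables is an $R$-linear combination of cluster monomials by a double induction: outer on the number of non-initial cluster variables appearing, inner on the number of pairwise-incompatible pairs among the factors. If such a product is not already a cluster monomial then -- since the cluster complex is flag (a pairwise-compatible family of cluster variables is a face, as its non-initial members must form a chain of subsets and every chain of subsets is realized by an activation sequence) -- it contains a pair in different clusters; applying the relevant identity to that pair and distributing, the ``main term'' $Y_{s\cap t}Y_{s\cup t}$ (resp.\ $A_iY_{t\setminus i}$) keeps the outer count while strictly lowering the inner one, and every correction term strictly lowers the outer count, so the induction closes.

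The main obstacle is partly interpretive and partly bookkeeping. The correction term is, in general, a \emph{sum} of several initial-cluster monomials rather than a single element of $R$, so the lemma is really ``$\ldots$ plus an $R$-linear combination of cluster monomials'' -- which is exactly what the basis conjecture consumes. Granting that, the delicate point is the inner monovariant: replacing $\{s,t\}$ by $\{s\cap t,s\cup t\}$ turns the pair $\{s,t\}$ from incomparable to comparable, but one must check that no third factor produces new incomparabilities faster than that; a short set-theoretic argument does this (a third subset comparable to both $s$ and $t$ is automatically comparable to both $s\cap t$ and $s\cup t$, and in the mixed relation every newly incomparable $Y$--$Y$ pair is offset by a deleted $X$--$Y$ pair), forcing a strict drop. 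Finally, spanning is only half of the basis conjecture; the $R$-linear independence of the cluster monomials is a separate matter, which I would attack by a leading-exponent argument directly on the Laurent expansions furnished by \eqref{eqn:cor exchange polynomials}.
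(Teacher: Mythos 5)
The paper does not prove this statement: it appears only as a conjecture in the concluding section, as a proposed stepping stone toward the cluster-monomial basis conjecture, so there is no argument of the authors to compare yours against. Judged on its own terms, your proposal is essentially sound. The factored form $Y_s=(A_s/X_s)\lp 1+X_{[n]}\Sigma_s\rp$ is a correct rearrangement of \eqref{eqn:cor exchange polynomials}; the compatibility criterion (two non-initial variables share a cluster iff their underlying sets are nested, and $X_i$ shares a cluster with $Y_t$ iff $i\notin t$) follows from the description of the clusters of $\tau_n^{\vs}$; and both of your identities check out, namely
\[
Y_sY_t-Y_{s\cap t}Y_{s\cup t}=\frac{A_sA_t\,X_{[n]}^2}{X_sX_t}\,\Sigma_{s\setminus t}\Sigma_{t\setminus s}
\qquad\text{and}\qquad
X_iY_t=A_iY_{t\setminus i}+A_{t\setminus i}\prod_{\ell\notin t}X_\ell\ \ (i\in t),
\]
the latter being the exchange relation \eqref{eqn:E notAC} read backwards. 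In the adjacent case $s=r\cup\{a\}$, $t=r\cup\{b\}$ your single correction term $\lp A_r\prod_{\ell\notin s\cup t}X_\ell\rp^2$ recovers exactly the identity displayed in the proof of Theorem~\ref{thm:mutating seeds}, which is evidently the computation that motivated the conjecture. Your lexicographic monovariant also survives scrutiny: a set comparable to both $s$ and $t$ is comparable to $s\cap t$ and $s\cup t$, and in the mixed case a newly incomparable $Y$--$Y$ pair (which really can occur, e.g.\ $t=\{1,2,3\}$, $i=1$, third factor $Y_{\{1,2\}}$) is always offset by the vanishing of the incompatible pair $(X_i,Y_u)$ with $i\in u$, so the count of incompatible pairs never increases while the processed pair is resolved; flagness of the cluster complex is correct for the same chain-of-subsets reason you give.

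The one genuine caveat is the interpretive point you already flag, and it should be stated as a correction to the conjecture rather than buried as an aside: as literally phrased the statement does not hold. The correction term is in general an $R$-linear combination of several monomials in the initial variables (and lies in the base ring only when $s\cup t=[n]$), and when $s\cap t=\emptyset$, or in the mixed case $X_iY_t$, the ``product of two variables in the same cluster'' must be read with the convention $Y_\emptyset=1$ or with a base-ring coefficient replacing one factor. Your reformulated version --- product of incompatible variables equals a compatible cluster monomial plus an $R$-combination of initial-cluster monomials --- is what your argument proves and is what the basis conjecture actually needs for spanning; as you note, linear independence of cluster monomials is a separate question not touched by this lemma.
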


This would allow us to swap out a product of two variables in a different cluster for a product in the same cluster. We would then hope to iterate this to prove the cluster monomials are a basis, though to do this we must prove that this swapping process terminates. Most likely, the proof will involve finding a useful grading for $\mathcal{A}$.

\end{document}